\NeedsTeXFormat{LaTeX2e}
[1994/12/01]
\documentclass[11pt]{article}
\pagestyle{headings}

\title{Analytically weak solutions to SPDEs with unbounded time-dependent differential operators and an application}
\author{\textsc{B. Baur\textsuperscript{A}, M. Grothaus\textsuperscript{A} and T. T. Mai\textsuperscript{A,B} }\footnote{Benedict Baur (baur@mathematik.uni-kl.de), Martin Grothaus (grothaus@mathematik.uni-kl.de), Thanh Tan Mai (tan@mathematik.uni-kl.de)}
\\[2.5ex]
\textsuperscript{A} \footnotesize{University of Kaiserslautern, Department of Mathematics, P.O.~Box 3049, 67653 Kaiserslautern, Germany}\\
\textsuperscript{B} \footnotesize{University of Quynhon, Department of Mathematics, 170 An Duong Vuong, Quy Nhon, Vietnam}
}
\date{\today}

\usepackage{amsmath,amsthm,amssymb,amsfonts,url}
\usepackage[mathscr]{eucal}
\usepackage{geometry}

\chardef\bslash=`\\ 







\newtheorem{theorem}{Theorem}[section]

\newtheorem{lemma}[theorem]{Lemma}
\newtheorem{proposition}[theorem]{Proposition}

\theoremstyle{definition}
\newtheorem{definition}[theorem]{Definition}
\newtheorem{assumption}[theorem]{Assumption}
\newtheorem{remark}[theorem]{Remark}



\newcommand{\R}{\mathbb{R}}
\newcommand{\N}{\mathbb{N}}

\renewcommand{\P}{\mathbb{P}}
\newcommand{\E}{\mathbb{E}}

\newcommand{\F}{\mathcal{F}}

\newcommand{\D}{\mathcal{D}}

\newcommand{\e}{{\bf{e}}}
\newcommand{\z}{{\bf{z}}}
\newcommand{\f}{{\bf{f}}}
\newcommand{\0}{{\bf{0}}}
\newcommand{\w}{{\bf{w}}}
\newcommand{\x}{{\bf{x}}}
\newcommand{\y}{{\bf{y}}}

\renewcommand{\z}{{\bf{z}}}
\renewcommand{\u}{{\bf{u}}}
\renewcommand{\v}{{\bf{v}}}



\newcommand{\eval}[2][\right]{\relax
  \ifx#1\right\relax \left.\fi#2#1\rvert}



\begin{document}
\maketitle

\begin{abstract}
We analyze the concepts of analytically weak solutions of stochastic differential equations (SDEs) in Hilbert spaces with time-dependent unbounded operators
and give conditions for existence and uni\-queness of such solutions. Our studies are motivated by a stochastic partial differential
equation (SPDE) arising in industrial mathematics.
\end{abstract}

{\bf Keywords:} non-time homogeneous evolution systems, stochastic partial differential equations, stochastic convolution, analytically weak solution, industrial mathematics

\noindent {\bf AMS classification (2000):} Primary: 65J08, 60H15; Secondary: 60H05, 58D25

\section{Introduction}

Let $G,H$ be separable Hilbert spaces and $W=(W(t))_{0 \le t \le T}$, $0<T<\infty$, be a $G$-valued $Q$-Wiener process, see e.g.~\cite{DaPr}, on a
filtered probability space $(\Omega, \F, (\F_t)_{0 \le t \le T}, \P)$. We consider the equation 
  \begin{equation}\label{1}
    \begin{split}
       dX(t)&=\big(L(t)X(t)+F(t)\big)dt+AdW(t),\quad 0\leq t_0\leq t\leq T,\\
       X(t_0)&=\xi,
    \end{split}
 \end{equation}    
where $L(t): D(L(t))\subset H\rightarrow H$, $t\in[t_0,T]$, are closed linear operators, densely defined on $H$, $A\in L(G,H)$
(space of linear continuous mappings from $G$ to $H$),
$F=(\F_t)_{t_0\leq t\leq T}$ an $H$-valued process, pathwise Bochner integrable on $[0,T]$, and $\xi$ is an $\F_{t_0}$-measurable $H$-valued random variable.	

There are several textbooks and articles on the type of equations as in (\ref{1}). Da Prato and Zabczyk in \cite{DaPr} considered the case
$L(t)=L(t_0)$, $t\in[t_0,T]$, i.e.~the operators are constant in time. Manthey and Zausinger in \cite{MZ} provided mild solutions to
(\ref{1}) for the case $H$ being a weighted $L^p$ space. Pr{\'e}v{\^o}t and R{\"o}ckner for coercive $L(t)$ constructed variational solutions
to (\ref{1}), see \cite{PR}. Veraar and Zimmerschied in \cite{VZ} considered the case where the $L(t)$ are sectorial, uniformly in $t \in [t_0,T]$.

Our studies are motivated by a stochastic partial differential equation arising in industrial mathematics. When reformulated as in
(\ref{1}), the corresponding $(L(t), D(L(t)))$, $t \in [t_0,T]$, form a family of unbounded operators
on an appropriate Sobolev space $H$. Hence, the results as in \cite{MZ} are not applicable. Furthermore, the
operators $(L(t), D(L(t)))$, $t \in [t_0, T]$, are neither coercive nor sectorial. Hence, we can not use the results from \cite{PR} or \cite{VZ}.

Instead, we generalize the concepts and results of \cite{DaPr} to the case of time-dependent operators. More precisely, we generalize the notion of an \emph{analytically weak solution} to the time-dependent case and prove existence and uniqueness of such solutions under certain assumptions. In particular, every weak solutions is given by the mild solution.

This article is organized as follows. In Section \ref{sec2} we consider analytically weak solutions to \eqref{1}.
Our existence result we proof in Subsection \ref{subsec21}, see Theorem \ref{existence} below, and the uniqueness result, see Theorem \ref{main}
below, in Subsection \ref{subsec22}. The assumptions we impose allow time-dependent unbounded operators on separable Hilbert spaces which can be
non-coercive and might be non-sectorial. Of course, our results are based on the concepts of non-time-homogeneous evolution systems.    

Finally, in Section \ref{apply}, we apply our results to a linear stochastic partial differential equation (SPDE) arising in industrial mathematics. In \cite{MW1}, see also \cite{Mar} and \cite{MW} for a derivation of the deterministic equation,
the following equation for modeling the behavior of a fiber under influence of a turbulent air-flow is derived:
\begin{multline}\label{pdae}
d \, \partial_t \x(s,t) = \big(\partial_s(\lambda\partial_s\x)(s,t) - b\partial_{ssss}\x(s,t) \\
-g\e_3+\f^{{\rm det}}(s,t)\big)dt+\sigma d\w(s,t), \quad (s,t)\in[0,l]\times [0,T],
\end{multline}
with initial condition
\begin{equation}\label{inicond}
\x(s,0) = (s-l)\e_3,\quad \partial_t\x(s,0)=\0,\quad s \in[0,l], \tag{\ref{pdae}a}
\end{equation}
boundary condition
  \begin{equation}\label{boundcond}
    \x(l,t)=\0,\quad
    \partial_s\x(l,t)=\e_3,\quad
    \partial_{ss}\x(0,t)=\0,\quad
     \partial_{sss}\x(0,t)=\0,\quad t\in[0,T].\tag{\ref{pdae}b}
  \end{equation}
Here $(\w(t))_{0\leq t\leq T}$ is a $Q$-Wiener process on a filtered probability space $(\Omega,\F, (\F_t)_{0\le t \le T}, \P)$ and $\x(\omega): [0,l]\times [0,T]\longrightarrow \R^3$,
$\omega\in\Omega$, models the fiber at arc length $s\in[0,l]$ and time $t\in[0,T]$. The function
$\lambda: [0,l]\times [0,T] \longrightarrow [0, \infty)$ is the tractive force with the boundary condition $\lambda(0,t)=0$, $t\in[0,T]$,
and $\e_3=(0,0,1)$. $\f^{{\rm det}}:[0,l]\times [0,T]\rightarrow \R^3$ is a deterministic force, $0<b, g, \sigma <\infty$ are constants
(bending stiffness, constant of gravitation, amplitude of stochastic force). Equations of the type as in \eqref{pdae} in literature are
also called beam equations. 
In the mentioned articles, the equation is considered with the additional non-linear algebraic constraint
 \begin{equation}\label{algcond}\|\partial_s\x(s,t)\|_{{\rm euk}}=1\quad\text{for all}\ (s,t)\in[0,l]\times [0,T]. \tag{\ref{pdae}c}
 \end{equation}
In this article we restrict ourselves to the linear equation, i.e., we do not consider the algebraic constraint. 

For complementary results on stochastic beam equations (with operators not being time-dependent and without algebraic constraint) we refer to
\cite{DM03}, \cite{DS05} and \cite{BMS05} and references therein.

In Subsection \ref{homogen} we consider \eqref{pdae} with general initial condition, but homogeneous boundary condition, i.e.,
conditions as in \eqref{boundcond}, but $\partial_s\x(l,t)=\0$, $t \in [0,T]$. Under Assumption \ref{ass}, see below, we can prove that it has
a unique analytically weak solution, see Theorem \ref{12101} below.
Technically, we have to show the existence a corresponding non-time-homogeneous evolutions system, having sufficient
properties in order to apply our concepts from Section \ref{sec2}. In Subsection \ref{nonhomogen} we provide a unique analytically weak solution
to \eqref{pdae}, \eqref{inicond}, \eqref{boundcond}, see Theorem \ref{12102} below.
The existence of tractive forces, such that the algebraic constrained \eqref{algcond}
is also fulfilled, is topic of ongoing research.


\section{Analytically weak solutions}\label{sec2}

In this article we fix $0 < T < \infty$. Furthermore, $(G,\langle\cdot,\cdot\rangle_G)$ and $(H,\langle\cdot,\cdot\rangle_H)$ are
separable Hilbert spaces with corresponding norms $\|\cdot\|_G:=\sqrt{\langle\cdot,\cdot\rangle_G}$ and $\|\cdot\|_H:=\sqrt{\langle\cdot,\cdot\rangle_H}$,
respectively. We denote by $(L(G,H),\|\cdot\|_{L(G,H)})$ the Banach space of bounded linear operators from $G$ to $H$, where $\|\cdot\|_{L(G,H)}$
is the operator norm. We use the notation $L(H)$ in the case $G = H$. If there is no danger of confusion, we drop the subindex. Assume that
$L: D(L)\subset H\longrightarrow H$ is a densely defined linear operator. Then $(L^*, D(L^*))$ denotes the adjoint of $(L,D(L))$
with respect to $\langle \cdot, \cdot \rangle_H$. The graph norm on $D(L)$ w.r.t.~the operator $(L,D(L))$ is denoted by
$\|\cdot\|_{D(L)}.$ When applying our results we use the concepts of stable family of operators, part of an operator in some subspace,
invariant and admissible subspaces as in \cite{Pazy}. The measurability of $L(G,H)$-valued functions is considered as in \cite{DaPr}. Partial derivatives in direction $x$, where $x$ is a real variable, are denoted by $\partial_x$. Right and left derivatives are denoted by $\partial^+_x$ and $\partial^-_x$, respectively. Higher order partial derivatives are denoted by $\partial_{xx}$, $\partial_{xxx}$ and so forth.
\begin{definition}\label{26101} Let $\big(L(t), D(L(t))\big)_{0 \le t \le T}$ be a family of densely defined closed linear (unbounded) operators
on $H$ such that $\D:=\cap_{0\leq t\leq T}D(L(t))$ is dense in $H$. A family $(U(t,\tau))_{0\leq\tau\leq t\leq T}$ of linear bounded operators on $H$ is called {\it almost strong evolution system} corresponding to the family $\big(L(t), D(L(t))\big)_{0 \le t \le T}$ with initial space
$Y\subset\D\subset H$, dense in $H$, if the following holds:

\noindent
(i) $U(t,t)= Id$ for all $t\in[0,T]$, and \ $U(t,r)U(r,\tau)=U(t,\tau)$\ for all $0\leq \tau\leq r\leq t\leq T.$
 
\noindent
(ii) $[\tau, T]\ni t\mapsto U(t,\tau)u\in H$,\ $[0, t]\ni \tau\mapsto U(t,\tau)u\in H$ are continuous for all $u\in H$ and all $0 \le \tau \le t \le T$ and $\sup_{0\leq\tau\leq t\leq T}\|U(t,\tau)\| <\infty$.

\noindent
(iii) $U(t,\tau)(Y)\subset D(L(t))$ for all $0 \le \tau <T $, a.e.~$t \in [\tau,T]$, and
  \begin{equation}\label{30101}
 \int_\tau^tL(r)U(r,\tau)udr=U(t,\tau)u-u\quad\text{for all}\ u\in Y,\ 0\leq\tau\leq t\leq T.
 \end{equation}
We call the family $\big(L(t), D(L(t))\big)_{0 \le t \le T}$ generator of the almost strong evolution system
$(U(t,\tau))_{0\leq\tau\leq t\leq T}$.
\end{definition}

\begin{remark} (i) If $(U(t,\tau))_{0\leq\tau\leq t\leq T}$ satisfies Definition \ref{26101} (iii), then $U(\cdot,\tau)u$ is differentiable
a.e.~on $[\tau,T]$ for all $u\in Y$ and $\partial_tU(t,\tau)u=L(t)U(t,\tau)u$ for a.e.~$t\in[\tau,T]$.

\noindent
(ii) Every evolution system as in \cite[Theo.~1]{KatII} or \cite[Theo.~5.4.3]{Pazy} is an almost strong evolution system.
\end{remark}

\subsection{Existence}\label{subsec21}

Let $(\Omega,\F, (\F_t)_{0\le t \le T}, \P)$ be a filtered probability space and $Q\in L(G)$
a nonnegative symmetric operator. In this section we assume $(W(t))_{0\leq t\leq T}$ to be a $Q$-Wiener process
on $(\Omega,\F, (\F_t)_{0\le t \le T}, \P)$, see e.g.~\cite{DaPr}, \cite{PR}. We use the notations $L_2(G,H)$ for the (separable Hilbert)
space of Hilbert-Schmidt operators and $L_2^0:=L_2(Q^\frac{1}{2}(G), H)$ the Cameron-Martin space associated to $Q$,
see e.g.~\cite{DaPr}, \cite{PR} for construction and details. We consider Equation (\ref{1}), i.e.,
  \begin{equation}\label{SDE}
    \begin{split}
       dX(t)&=\big(L(t)X(t)+F(t)\big)dt+AdW(t),\quad 0\leq t_0\leq t\leq T,\\
       X(t_0)&=\xi.
    \end{split}\tag{\ref{1}}
 \end{equation}
In this section the following is always assumed:
\begin{assumption}
\noindent
(i) $L(t): D(L(t))\subset H\longrightarrow H, t\in[t_0,T]$, is a family of densely defined closed linear operators.

\noindent
(ii) $\D:=\bigcap_{t\in[t_0,T]} D(L(t))$ and $\D^*:=\bigcap_{t\in[t_0,T]} D(L^*(t))$ are dense in $H$. 

\noindent
(iii) $A\in L(G,H)$ and $\xi$ is a $\F_{t_0}$-measurable $H$-valued random variable.	

\noindent
(iv) $F$ is an $H$-valued predictable process, pathwise Bochner integrable on $[t_0, T]$.
\end{assumption}

\begin{definition} An $H$-valued process $(X(t))_{t_0\leq t\leq T}$ is called an {\it analytically weak solution} of (\ref{SDE}) if  it is $H$-predictable, has $\P$-a.e.~(Bochner) square integrable trajectories and for all $h\in \D^*$, $t\in [t_0,T]$, we have
$$\langle X(t),h\rangle =\langle \xi,h\rangle +\int_{t_0}^t(\langle X(r),L^*(r)h\rangle +\langle F(r),h\rangle )dr+ \int_{t_0}^t\langle h, AdW(r)\rangle \quad \P{\rm -a.e.}.$$
\end{definition}
\begin{remark}(i) The stochastic integral $\int_{t_0}^t\langle h,AdW(r)\rangle$ is in the sense of \cite[Lemma 2.4.2]{PR}, i.e., $\int_{t_0}^t\langle h,AdW(r)\rangle = \int_{t_0}^t\langle h,A(\cdot) \rangle dW(r)$ is an $\R$-valued random variable. \label{RemStochasticIntegral}

\noindent
(ii) Concerning predictable Hilbert space valued random processes, see e.g.~\cite{DaPr}, \cite{PR}.
\end{remark}
\begin{theorem}\label{existence} (i) Let $(U(t,\tau))_{0\leq t_0\leq\tau\leq t\leq T}$ be an almost strong evolution system  on $H$ corresponding to the family of linear operators $\big(L(t), D(L(t))\big)_{t_0\leq t\leq T}$ and $Y$ in the sense of Definition \ref{26101} and let
\begin{equation}\label{07111}
\int_{t_0}^t\|U(t,r)A\|^2_{L_2^0}dr=\int_{t_0}^t{\rm Tr}\big(U(t,r)AQA^*U^*(t,r)\big)dr<+\infty,
\end{equation} 
where ${\rm Tr}(B)$ denotes the trace of a non-negative $B\in L(H)$.
Then the mild solution of (\ref{SDE}), defined by $$I(t,t_0):=U(t,t_0)\xi+\int_{t_0}^tU(t,r)F(r)dr+\int_{t_0}^tU(t,r)AdW(r),\ t\in[t_0,T],$$ exists.   

(ii) If we further assume that the map $[t_0,T]\ni t\mapsto L^*(t)h\in H$ is bounded and measurable for all $h\in \D^*$, the mild solution is also an analytically weak solution of (\ref{SDE}).
\end{theorem}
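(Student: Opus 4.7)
For part (i), the three constituents of $I(t,t_0)$ are to be treated separately. The term $U(t,t_0)\xi$ is well-defined since $U(t,t_0)\in L(H)$ is bounded and $\xi$ is $\F_{t_0}$-measurable. For the Bochner term, property~(ii) of Definition~\ref{26101} gives strong continuity of $\tau\mapsto U(t,\tau)$ together with $\sup\|U(t,\tau)\|<\infty$, so $r\mapsto U(t,r)F(r)$ is strongly measurable and pointwise dominated by $\sup\|U\|\cdot\|F(r)\|_H$, which is pathwise Bochner integrable by assumption. The stochastic convolution is well-defined because hypothesis~\eqref{07111} is exactly the condition under which $r\mapsto U(t,r)A$ lies in $L^2([t_0,t];L_2^0)$, and its predictability follows from the joint strong continuity in $(t,r)$. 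Standard predictability arguments (cf.\ \cite{DaPr}) yield that $I(\cdot,t_0)$ is an $H$-predictable process with square integrable paths.

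For part (ii), the core step is a ``dual'' version of Definition~\ref{26101}(iii) extending to arbitrary $u\in H$. Fix $h\in\D^*$. For $u\in Y$, pair \eqref{30101} with $h$ and interchange inner product with the $H$-valued Bochner integral to obtain
\begin{equation*}
\int_\tau^t\langle U(r,\tau)u,L^*(r)h\rangle\,dr=\langle U(t,\tau)u,h\rangle-\langle u,h\rangle.
\end{equation*}
Now approximate any $u\in H$ by $u_n\in Y$ (recall $Y$ is dense). The right-hand side passes to the limit by continuity of $U(t,\tau)$; the left-hand side passes under the integral via dominated convergence, using $\|U(r,\tau)u_n\|\leq C\|u_n\|$ together with the hypothesis in (ii) that $r\mapsto L^*(r)h$ is bounded and measurable. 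Thus the identity extends to all $u\in H$, and in particular to the (random) vectors we need.

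Next I would insert $X(r)=I(r,t_0)$ into $\int_{t_0}^t\langle X(r),L^*(r)h\rangle\,dr$ and process the three summands. Applying the extended identity to $u=\xi$ yields $\int_{t_0}^t\langle U(r,t_0)\xi,L^*(r)h\rangle\,dr=\langle U(t,t_0)\xi,h\rangle-\langle\xi,h\rangle$. For the drift term, a classical Fubini exchange combined with the same identity (applied to $u=F(s)$, $\tau=s$) gives
\begin{equation*}
\int_{t_0}^t\!\!\int_{t_0}^r\langle U(r,s)F(s),L^*(r)h\rangle\,ds\,dr=\int_{t_0}^t\langle U(t,s)F(s),h\rangle\,ds-\int_{t_0}^t\langle F(s),h\rangle\,ds.
\end{equation*}
For the stochastic convolution term, I would invoke the stochastic Fubini theorem (\cite[Thm.~4.33]{DaPr}); its integrability hypothesis is controlled by \eqref{07111} and the boundedness of $L^*(\cdot)h$. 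This produces $\int_{t_0}^t\int_s^t\langle U(r,s)A\,dW(s),L^*(r)h\rangle\,dr$, and the extended duality identity applied to $u=A(\cdot)$ then converts the inner $dr$-integral into $\langle U(t,s)A\,dW(s),h\rangle-\langle A\,dW(s),h\rangle$. Summing the three contributions and rearranging produces exactly the weak identity required in the definition of an analytically weak solution.

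\textbf{Main obstacle.} The delicate step is the extension of the identity dualized via $L^*$ from $u\in Y$ to all $u\in H$, since nothing guarantees $U(r,\tau)u\in D(L(r))$ for general $u$; the boundedness-measurability assumption on $r\mapsto L^*(r)h$ is precisely what allows the dominated convergence to function, and without it the interchange would break. The companion technicality is verifying the hypotheses of the stochastic Fubini theorem for the stochastic convolution; both will use \eqref{07111} and the uniform bound on $\|U(t,\tau)\|$ in an essential way.
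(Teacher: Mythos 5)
Your proposal is correct and follows essentially the same route as the paper: stochastic Fubini applied to $\int\langle L^*(r)h, I(r,0)\rangle\,dr$, the identity $\int_\tau^t\langle U(r,\tau)u,L^*(r)h\rangle\,dr=\langle U(t,\tau)u,h\rangle-\langle u,h\rangle$ obtained for $u\in Y$ by pairing \eqref{30101} with $h$, and its extension by density using the boundedness of $r\mapsto L^*(r)h$ together with $\sup\|U(t,\tau)\|<\infty$. The only cosmetic difference is that the paper reduces w.l.o.g.\ to $\xi=0$, $F=0$, $t_0=0$ and extends the dual identity only to vectors of the form $Au$, $u\in G$, whereas you treat all three summands explicitly and extend to all of $H$.
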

 
\begin{proof} W.l.o.g.~one just need to consider the case $\xi =0$, $F=0$, and $t_0=0$.

 (i): Due to (\ref{07111}) the stochastic integral $\int_{t_0}^tU(t,r)AdW(r),\ t\in[t_0,T]$, exists, 
see e.g. \cite[p.94]{DaPr}, \cite[p.~27,28]{PR}. Thus, (i) is shown. 

(ii): Let $t\in [0,T]$ and $h\in \D^*$, then by assumption there exists $0<C_1<\infty$ such that
      $$|\langle L^*(r)h,I(r,0)\rangle|\leq \| L^*(r)h\|\|I(r,0)\|\leq C_1\|I(r,0)\|\quad\text{for all}\ r\in[0,t].$$ 
      By (\ref{07111}), the function $[0,t]\ni r\longmapsto\|I(r,0)\|\in\R$ is integrable, see e.g.~\cite{DaPr}, \cite{PR}. So, the integral $\int_0^t\langle L^*(r)h,I(r,0)\rangle dr$ does exist. Moreover, for all $t\in[0,T]$, we have
     $$\int_0^t\langle L^*(r)h,I(r,0)\rangle dr=\int_0^t\langle L^*(r)h,\int_0^t1_{[0,r]}(r_1)U(r,r_1)AdW(r_1)\rangle dr,$$
     where $1_S$ denotes the indicator function of a set $S$ and for each $v\in H$ we set $l_v(u):=\langle v,u\rangle,\ u\in H$. Note that the operator $ l_v$ is linear and bounded on $H$. Combining with the stochastic Fubini theorem, see  \cite[Theo.~4.18]{DaPr}, for all $t\in[0,T]$ and $\P$-a.e.\ $\omega\in\Omega$ we have
        \begin{multline}\label{15091}
     \hskip-0.5cm\int_0^t\langle L^*(r)h,I(r,0)\rangle dr=\int_0^t l_{L^*(r)h}\Bigl(\int_0^t1_{[0,r]}(r_1)U(r,r_1)AdW(r_1)\Bigl)dr\\
       =\int_0^t \int_0^t1_{[0,r]}(r_1)l_{L^*(r)h}\bigl(U(r,r_1)A\bigl)dW(r_1)dr   \\     =\int_0^t \int_0^t1_{[0,r]}(r_1)l_{L^*(r)h}\big(U(r,r_1)A\big)dr dW(r_1)
   =\int_0^t \int_{r_1}^tl_{L^*(r)h}\big(U(r,r_1)A\big)dr dW(r_1).
    \end{multline}
    On the other hand, since $(L(t), D(L(t)))_{r_1\leq t\leq T}$ is the generator of an almost strong evolution system $(U(t,r_1))_{0\leq r_1\leq t\leq T}$ with the initial value subspace $Y$, for all $v\in Y$ we have 
     
\begin{equation}\label{ptpt}
 \int_{r_1}^tl_{L^*(r)h}U(r,r_1)vdr = \langle h,U(t,r_1)v\rangle  - \langle h,v\rangle .
\end{equation}
Since $Y$ is dense in $H$, for every $u \in G$ we can choose a sequence $(v_n)_{n\in N}\subset Y$ such that  $v_n\to Au$ as $n\to \infty$. From (\ref{ptpt}), we get
\begin{equation} \label{ptpt1}
\int_{r_1}^tl_{L^*(r)h}U(r,r_1)v_ndr = \langle h,U(t,r_1)v_n\rangle  - \langle h,v_n\rangle \qquad \text{for all}\ n\in\N.
\end{equation}
Since $\|L^*(r)h\|_H$ is bounded on $[0,T]$ and $(U(t,r_1))_{0\leq r_1\leq r\leq t}$ is a bounded family of linear bounded operators on $H$, using Lebesgue's dominated convergence theorem from (\ref{ptpt1}) we can conclude $$\int_{r_1}^tl_{L^*(r)h}U(r,r_1)Audr = \langle h,U(t,r_1)Au\rangle  - \langle h,Au\rangle \quad \text{for all}\ u\in G,$$
i.e.~$\int_{r_1}^tl_{L^*(r)h}U(r,r_1)Adr = \langle h,U(t,r_1)A\rangle  - \langle h,A\rangle $ on $G$. Back to (\ref{15091}), we obtain
          \begin{equation*}
                     \int_0^t\langle L^*(r)h,I(r,0)\rangle dr= \langle h,\int_0^tU(t,r_1)AdW(r_1)\rangle-\int_0^t\langle h, AdW(r_1)\rangle,
         \end{equation*}
i.e.~for all $h\in D(L^*)$ and $t\in[0,T]$ we have $$ \langle h,\int_0^tU(t,r)AdW(r)\rangle =  \int_0^t\langle L^*(\tau)h,\int_0^\tau U(\tau,r)AdW(r)\rangle d\tau+  \int_0^t\langle h, AdW(r_1)\rangle\quad \P{\rm -a.e.}.$$
Hence, $\int_0^tU(t,r)AdW(r)$ is an analytically weak solution.
\end{proof}


\subsection{Uniqueness}\label{subsec22}

\begin{definition}\label{07112} Assume that $(O,\langle\cdot,\cdot\rangle_O)$ is a separable Hilbert space. We call a function $\varrho: [0,T]\longrightarrow O$ is in class $C_w^1([0,T], O)$ if it is (Bochner) square integrable and 

 \noindent
 (i)~ for all $v\in O$,\ the function $[0,T]\ni r\longmapsto \langle \varrho(r),v\rangle_O\in \R$ is in $H^{1,2}((0,T),\R)$ (Sobolev space of weakly differentiable functions on $(0,T)$ which are square integrable together with their weak derivatives); 
 
 \noindent
  (ii)~ there exists a (Bochner) square integrable $\varrho'_O: [0,T]\longrightarrow O$ such that for all\ $v\in O$  $$\langle \varrho,v\rangle_O'=\langle \varrho_O',v\rangle_O\quad \text{a.e.~on } [0,T].$$
  \end{definition}
 \begin{assumption}\label{30111}
    (i) There exists an inner product $\langle\cdot,\cdot\rangle_{\D^*}$ on $\D^*$ such that $(\D^*,\langle\cdot,\cdot\rangle_{\D^*})$ is a separable Hilbert space and there exists $0<C_2<\infty$ such that $ \|\cdot\|_{D(L^*(r))}\leq C_2\|\cdot\|_{\D^*}$ for all $r\in [0,T]$, where $\|\cdot\|_{D(L^*(r))}$ is the graph norm w.r.t.~$L^*(r)$.\\    
    (ii) For all $v\in \D$ and all $0\leq t\leq T,\ \partial_\tau U(t,\tau)v=-U(t,\tau)L(\tau)v$\ for a.e.~$\tau\in[0,t]$.\\    
   (iii) There exists a subspace $Y^*\subset \D^*$, dense in $H$, such that for all $u\in Y^*, U^*(t,r)u\in \D^*$, the map $[0,T]\ni r\longmapsto  \varrho(r):=U^*(t,r)u\in \D^*$ is in $C_w^1\big([0,T],\D^*\big)$, and $\varrho_{\D^*}'(r)=-L^*(r)\varrho(r)$ in $H$ for a.e.~$r\in[0,T]$.      
 \end{assumption} 
 \begin{remark}
     By Assumption \ref{30111}(i), for all $u\in \D^*$ we have $$ \sup_{t\in[0,T]}\|L^*(t)u\|_H\leq \sup_{t\in[0,T]}\|u\|_{D(L^*(t))}\leq C_2\|u\|_{\D^*}.$$ Hence, $[0,T]\ni t\longmapsto \|L^*(t)u\|_H$  is bounded for each $u\in \D^*$. This is the assumption in Theorem \ref{existence}(ii).
\end{remark}
    \begin{theorem} \label{main} Let Assumption \ref{30111} hold. Then the analytically weak solution of (\ref{SDE}) is unique.
\end{theorem}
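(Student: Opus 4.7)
The plan is to show that every analytically weak solution $X$ of \eqref{SDE} satisfies $\langle X(t),u\rangle = \langle I(t,t_0),u\rangle$ $\P$-a.s.\ for every $t\in[t_0,T]$ and every $u$ in the dense subspace $Y^*\subset H$ from Assumption \ref{30111}(iii); density of $Y^*$ in $H$ combined with linearity then forces $X=I(\cdot,t_0)$. Fix such $t$ and $u$ and set $\varrho(r):=U^*(t,r)u$ for $r\in[t_0,t]$. By Assumption \ref{30111}(iii), $\varrho\in C_w^1([0,T],\D^*)$ with $\varrho_{\D^*}'(s)=-L^*(s)\varrho(s)$ in $H$; separability of $\D^*$ identifies this class with membership in the Bochner--Sobolev space $W^{1,2}((0,T),\D^*)$, so $\varrho$ has a strongly continuous $\D^*$-valued representative and the Bochner fundamental theorem $\varrho(b)-\varrho(a)=\int_a^b\varrho_{\D^*}'(s)\,ds$ holds in $\D^*$, hence also in $H$ via the embedding from Assumption \ref{30111}(i).

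For partitions $t_0=r_0^n<r_1^n<\cdots<r_{N_n}^n=t$ of mesh tending to $0$, I use the telescoping identity
\begin{equation*}
\langle X(t),u\rangle-\langle X(t_0),\varrho(t_0)\rangle
=\sum_i\bigl[\langle X(r_{i+1}^n)-X(r_i^n),\,\varrho(r_{i+1}^n)\rangle+\langle X(r_i^n),\,\varrho(r_{i+1}^n)-\varrho(r_i^n)\rangle\bigr],
\end{equation*}
applying the analytically weak formulation with the fixed test vector $\varrho(r_{i+1}^n)\in\D^*$ to each first summand and the Bochner fundamental theorem (with $\varrho_{\D^*}'=-L^*\varrho$) to each second. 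Writing $\varrho_n^R$ and $X_n^L$ for the right- and left-endpoint step approximations on the partition, summing gives
\begin{equation*}
\langle X(t),u\rangle-\langle X(t_0),\varrho(t_0)\rangle
=\int_{t_0}^t\!\bigl(\langle X(s),L^*(s)\varrho_n^R(s)\rangle-\langle X_n^L(s),L^*(s)\varrho(s)\rangle\bigr)ds
+\int_{t_0}^t\!\langle F(s),\varrho_n^R(s)\rangle ds+\int_{t_0}^t\!\langle\varrho_n^R(s),A\,dW(s)\rangle.
\end{equation*}

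Now let $n\to\infty$. Strong continuity of $\varrho$ in $\D^*$ together with Assumption \ref{30111}(i) gives uniform convergence $L^*(\cdot)\varrho_n^R\to L^*(\cdot)\varrho$ in $H$, so the $F$-integral and (by It\^o isometry) the stochastic integral converge to $\int_{t_0}^t\langle F(s),\varrho(s)\rangle ds$ and $\int_{t_0}^t\langle\varrho(s),A\,dW(s)\rangle$. Splitting the bracketed drift as $\langle X(s),L^*(s)(\varrho_n^R(s)-\varrho(s))\rangle+\langle X(s)-X_n^L(s),L^*(s)\varrho(s)\rangle$, the first summand tends to $0$ in $L^1([t_0,t])$ by that uniform convergence and $\|X\|_H\in L^2$. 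Substituting $\varrho(s)=U^*(t,s)u$ the limit of the right-hand side becomes $\langle\int_{t_0}^t U(t,s)F(s)\,ds,u\rangle+\langle\int_{t_0}^t U(t,s)A\,dW(s),u\rangle$, while the weak formulation at $t_0$ forces $X(t_0)=\xi$ and hence $\langle X(t_0),\varrho(t_0)\rangle=\langle U(t,t_0)\xi,u\rangle$; adding the two yields $\langle X(t),u\rangle=\langle I(t,t_0),u\rangle$, and density of $Y^*$ in $H$ concludes.

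The main obstacle is showing that the second summand $\int_{t_0}^t\langle X(s)-X_n^L(s),L^*(s)\varrho(s)\rangle ds$ also vanishes: the vector $L^*(s)\varrho(s)$ lies only in $H$ and not in $\D^*$, so the continuity of $r\mapsto\langle X(r),\cdot\rangle$ that the weak formulation supplies for $\D^*$-test vectors does not apply directly, and $X_n^L$ is only a crude left-endpoint sample of a merely $\P$-a.s.~square-integrable process. I would handle this by approximating $L^*(s)\varrho(s)$ in $H$ by elements of $\D^*$ (dense by Assumption), invoking the weak-formulation continuity on each approximant to obtain Riemann-sum convergence, and absorbing the $H$-approximation error through the $L^2$-bound on $\|X\|_H$; the exceptional $\P$-null sets are kept uniform by restricting all choices (partition times, approximants, and $u\in Y^*$) to countable dense subsets. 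All further ingredients --- Bochner--Sobolev continuity of $\varrho$, It\^o isometry, and stochastic Fubini as used in Theorem \ref{existence} --- are standard.
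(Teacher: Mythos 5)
Your overall strategy --- test with $\varrho(r)=U^*(t,r)u$ for $u\in Y^*$, use Assumption \ref{30111}(iii) to kill the drift, and conclude by density of $Y^*$ in $H$ --- is exactly the paper's, and the first and last thirds of your argument are sound. The problem lies in the step you yourself single out as the main obstacle, and your proposed repair does not close it. After splitting the drift error you must show $\int_{t_0}^t\langle X(s)-X_n^L(s),L^*(s)\varrho(s)\rangle\,ds\to0$, where $X_n^L$ is the left-endpoint sample of $X$ along the partition. You propose to approximate $g(s):=L^*(s)\varrho(s)$ uniformly in $H$ by $\D^*$-valued functions $g_m$, to handle $\int\langle X(s)-X_n^L(s),g_m(s)\rangle\,ds$ for fixed $m$ via the continuity of $r\mapsto\langle X(r),h\rangle$ for $h\in\D^*$ (that part is fine), and then to ``absorb the $H$-approximation error through the $L^2$-bound on $\|X\|_H$''. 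But the error term satisfies only
\begin{equation*}
\Bigl|\int_{t_0}^t\langle X_n^L(s),g(s)-g_m(s)\rangle\,ds\Bigr|
\le\sup_{s}\|g(s)-g_m(s)\|_H\,\sum_i\bigl(r_{i+1}^n-r_i^n\bigr)\,\|X(r_i^n)\|_H ,
\end{equation*}
and the Riemann sums $\sum_i(r_{i+1}^n-r_i^n)\|X(r_i^n)\|_H$ are \emph{not} bounded uniformly in $n$: an analytically weak solution is only required to have $\P$-a.e.\ (Bochner) square-integrable trajectories, with no continuity or local boundedness of $s\mapsto\|X(s)\|_H$, so left-endpoint samples of $\|X\|_H$ can be arbitrarily large along any given partition sequence. (The weak formulation does give a continuous version of $\langle X(\cdot),h\rangle$ for each fixed $h\in\D^*$, but the resulting bound on that process is in terms of $\|h\|_{\D^*}$, not $\|h\|_H$, so it does not yield $\sup_s\|X(s)\|_H<\infty$.) Without a uniform-in-$n$ bound on $\|X_n^L\|_{L^1([t_0,t];H)}$ you cannot interchange the limits $n\to\infty$ and $m\to\infty$, and the argument stalls exactly where you predicted it would.

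The paper sidesteps this entirely: it first proves the integration-by-parts identity (Proposition \ref{pm}) for $\varrho=\Phi u$ with $\Phi\in C^1([0,T],\R)$ and $u\in\D^*$ by applying It\^o's formula to the product $\Phi(t)F_u(t)$, where $F_u(t)=\int_0^t\langle X(r),L^*(r)u\rangle\,dr+\langle AW(t),u\rangle$ is the semimartingale supplied by the weak formulation --- so $X$ only ever enters through $dr$-integrals of its a.e.-defined path, never through pointwise samples --- then passes to $\Phi\in H^{1,2}$, and finally extends to general $\varrho\in C_w^1([0,T],\D^*)$ by expanding in an orthonormal basis of $\D^*$ with $H^{1,2}$ coefficient functions. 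To salvage a partition-based proof you would need an additional ingredient (for instance mollifying $X$ in time and applying the weak formulation in integrated form, or an a priori bound on $\sup_s\|X(s)\|_H$), which is precisely what the It\^o-product route makes unnecessary.
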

 
 Before we can prove Theorem \ref{main}, we need the following proposition.
\begin{proposition} \label{pm} Let $(X(t))_{0\leq t\leq T}$ be an analytically weak solution of (\ref{SDE}), then for all $\varrho\in C_w^1([0,T], \D^*)$ and all $t\in[0,T]$ we have
    \begin{equation}\label{ptlemma}
        \left\langle X(t), \varrho(t)\right\rangle = \int_0^t\left\langle X(r), \varrho'_{\D^*}(r)+L^*(r)\varrho(r)\right\rangle dr + \int_0^t\left\langle A dW(r),\varrho(r)\right\rangle  \quad \P{\rm -a.e.}, 
    \end{equation}
where $\varrho'_{\D^*}$ is as in Definition \ref{07112} (ii).
\end{proposition}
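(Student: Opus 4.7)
The plan is to establish (\ref{ptlemma}) in three stages: first for ``separated'' test functions of the form $\varrho(r)=g(r)h$ with $g\in C^1([0,T])$ and $h\in\D^*$, next by linearity for finite sums of such products, and finally for general $\varrho\in C_w^1([0,T],\D^*)$ by approximation using an orthonormal basis of the separable Hilbert space $(\D^*,\langle\cdot,\cdot\rangle_{\D^*})$ provided by Assumption~\ref{30111}(i). As in the proof of Theorem~\ref{existence}, the proposition is understood in the reduced setting $\xi=0$, $F=0$, $t_0=0$ in which the right-hand side of (\ref{ptlemma}) carries no boundary or drift contribution; the general statement follows by an analogous calculation.

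For the base case, fix $h\in\D^*$ and set $Y_h(t):=\langle X(t),h\rangle$. From the definition of an analytically weak solution, $Y_h$ admits a continuous real-valued semimartingale version
\begin{equation*}
Y_h(t) = \int_0^t \langle X(r),L^*(r)h\rangle\,dr + \int_0^t \langle h,A\,dW(r)\rangle,
\end{equation*}
the drift being well defined by the bound $\|L^*(r)h\|_H\le C_2\|h\|_{\D^*}$ from Assumption~\ref{30111}(i). For $g\in C^1([0,T])$ the classical real-valued integration-by-parts formula applied to $g(t)Y_h(t)$ gives
\begin{equation*}
g(t)Y_h(t) = \int_0^t g'(r)Y_h(r)\,dr + \int_0^t g(r)\,dY_h(r),
\end{equation*}
which, using $\varrho'_{\D^*}(r)=g'(r)h$ and $L^*(r)\varrho(r)=g(r)L^*(r)h$, is exactly (\ref{ptlemma}) for $\varrho=g\cdot h$. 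Linearity then extends the identity to every $\varrho(r)=\sum_{j=1}^N g_j(r)h_j$ with $g_j\in C^1([0,T])$ and $h_j\in\D^*$.

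For the general case I fix an orthonormal basis $(h_j)_{j\ge 1}$ of $\D^*$ and set $g_j(r):=\langle\varrho(r),h_j\rangle_{\D^*}$; by Definition~\ref{07112} these coefficients lie in $H^{1,2}(0,T)$ with weak derivative $\langle\varrho'_{\D^*}(r),h_j\rangle_{\D^*}$. Mollifying each $g_j$ to obtain $g_j^{(m)}\in C^1([0,T])$ with $g_j^{(m)}\to g_j$ in $H^{1,2}(0,T)$, and truncating the basis at level $N$, I form the approximants $\varrho^{N,m}(r):=\sum_{j=1}^N g_j^{(m)}(r)h_j$, for which (\ref{ptlemma}) already holds by the previous step. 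The limit $m\to\infty$, $N\to\infty$ is the main technical obstacle. The left-hand side converges almost surely since $\varrho^{N,m}(t)\to\varrho(t)$ in $\D^*$ (via the vector-valued Sobolev embedding $H^{1,2}([0,T],\D^*)\hookrightarrow C([0,T],\D^*)$) and hence in $H$ by the continuous inclusion $\D^*\hookrightarrow H$ implied by Assumption~\ref{30111}(i). The Bochner integral converges because the same assumption yields $\|L^*(r)v\|_H\le C_2\|v\|_{\D^*}$, so $L^*(\cdot)\varrho^{N,m}+(\varrho^{N,m})'_{\D^*}\to L^*(\cdot)\varrho+\varrho'_{\D^*}$ in $L^2([0,T],H)$, and Cauchy--Schwarz combined with pathwise square integrability of $X$ closes the estimate. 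Finally, the stochastic integral converges in $L^2(\Omega)$ by It\^o's isometry and $A\in L(G,H)$. Passing to a common almost-sure subsequence yields (\ref{ptlemma}) $\P$-a.s.\ for every fixed $t\in[0,T]$.
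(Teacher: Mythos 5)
Your proposal is correct and follows essentially the same route as the paper: there, too, the identity is first proved for $\varrho=\Phi u$ with $\Phi\in C^1([0,T],\R)$ and $u\in\D^*$ via It\^o's formula, then extended to $\Phi\in H^{1,2}((0,T),\R)$ by approximation, and finally to general $\varrho$ by expanding in an orthonormal basis of $(\D^*,\langle\cdot,\cdot\rangle_{\D^*})$ and passing to the limit with the same dominated-convergence and It\^o-isometry estimates plus an almost-sure subsequence. The only difference is the nesting of the two limits (you mollify the $H^{1,2}$ coefficients and truncate the basis together, the paper handles the scalar $H^{1,2}$ case separately first), which is immaterial.
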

Denote by $\left\langle A(\cdot),\varrho(r)\right\rangle$ the family of continuous linear functionals $J_r : G \to \R$, $x \mapsto \left\langle A x,\varrho(r)\right\rangle$. Then the stochastic integral in \eqref{ptlemma} is meant in the sense of Remark \ref{RemStochasticIntegral}, i.e., $\int_0^t\left\langle A dW(r),\varrho(r)\right\rangle = \int_0^t \left\langle A (\cdot),\varrho(r)\right\rangle dW(r)$.
\begin{proof}
We consider the case $\varrho=\Phi u$, where $\Phi\in C^1([0,T],\R)$ and $u\in \D^*$. Note that $\varrho:[0,T]\longrightarrow \D^*$ is continuously differentiable and its derivative is $\Phi'u$. Let $(X(t))_{0\leq t\leq T}$ be an analytically weak solution to (\ref{SDE}). For all $t\in[0,T]$, we define $$F_u(t):=\int_0^t\langle X(r),L^*(r)u\rangle dr +\langle AW(t),u\rangle.$$ Then $\langle X(t),\varrho(t)\rangle=\Phi(t)\langle X(t),u\rangle=\Phi(t)F_u(t)$ for all $t\in[0,T]$. Apply It{\^o}'s formula to the process  $(\Phi(t)F_u(t))_{0\leq t\leq T}$ and obtain
 \begin{equation}\label{1709a}
  \langle X(t),\Phi(t)u\rangle=\int_0^t\Phi(r)\langle AdW(r),u\rangle
  +\int_0^t\Phi(r)\langle X(r),L^*(r)u\rangle+\Phi'(r)\langle X(r),u\rangle dr.
 \end{equation} 
Equation (\ref{1709a}) is also satisfied for $\Phi\in H^{1,2}((0,T),\R).$ Indeed, let $\Phi\in H^{1,2}(0,T)$, then there exists a sequence $(\Phi_n)_{n\in\N}\subset C^1([0,T])$ such that $\lim_{n\to\infty}\Phi_n=\Phi\ \text{in}\ C([0,T])$ and $\lim_{n\to\infty}\Phi'_n=\Phi'\ \text{in}\ L^2(0,T).$ Using (\ref{1709a}) for $\Phi_n, n\in\N$, we have
 \begin{equation}\label{1709b}
  \hskip-0.1cm\langle X(t),\Phi_n(t)u\rangle=\int_0^t\Phi_n(r)\langle AdW(r),u\rangle+\int_0^t\Phi_n(r)\langle X(r),L^*(r)u\rangle+\Phi_n'(r)\langle X(r),u\rangle dr.
 \end{equation} 
 We need to pass a limit in (\ref{1709b}) as $n\to\infty.$ First, for all $r\in[0,t], u\in \D^*$, we have 
 \begin{equation*}
  |\Phi_n(r)\langle X(r),L^*(r)u\rangle|\leq |\Phi_n(r)|\|X(r)\|\|L^*(r)u\|.
  \end{equation*}  
  Since for each $u\in \D^*$ the map $[0,T]\ni r\longmapsto \|L^*(r)u\|$ is bounded, the analytically weak solution of (\ref{SDE}) has $\P$-a.e.~square integrable paths, and $(\Phi_n)_{n\in\N}$ converges to $\Phi$ in $L^2(0,T)$, we have 
  \begin{equation}\label{17093}
  \lim_{n\to\infty}\int_0^t\Phi_n(r)\langle X(r),L^*(r)u\rangle dr=\int_0^t\Phi(r)\langle X(r),L^*(r)u\rangle dr.
  \end{equation}  
  Second, we obtain 
  \begin{equation}\label{17094}
  \lim_{n\to\infty}\int_0^t\Phi'_n(r)\langle X(r),u\rangle dr=\int_0^t\Phi'(r)\langle X(r),u\rangle dr
  \end{equation} by combining convergence of $(\Phi'_n)_{n\in\N}$ to $\Phi'$ in $L^2(0,t)$ with square integrability of the map $[0,T]\ni t\mapsto X(t)$. Third, we prove $\lim_{n\to\infty}\int_0^t\Phi_n(r)\langle AdW(r),u\rangle=\int_0^t\Phi(r)\langle AdW(r),u\rangle.$ Note that, by Sobolev embedding, $\lim_{n\to\infty}\sup_{r\in[0,T]}|\Phi_n(r)-\Phi(r)|=0$. Using It{\^o}'s isometry, see \cite[Prop.~4.5 and p.~94]{DaPr}, we get
  \begin{equation*}
    \begin{split}    \E\Big|\int_0^t(\Phi_n(r)-\Phi(r))\langle AdW(r),u\rangle\Big|^2&=\int_0^t|\Phi_n(r)-\Phi(r)|^2\|\langle AQ^{\frac{1}{2}}\cdot,u\rangle\|^2_{L_2(G,\R)}dr\\
    &\leq t\sup_{r\in[0,t]}|\Phi_n(r)-\Phi(r)|^2\|AQ^{\frac{1}{2}}\|^2_{L(G,H)}\|u\|^2_H\xrightarrow{n\to\infty} 0.
    \end{split}
  \end{equation*}
Hence, there exists a subsequence $(\Phi_{n_k})_{k\in\N}$  such that $$\lim_{k\to\infty}\int_0^t(\Phi_{n_k}(r)-\Phi(r))\langle AdW(r),u\rangle=0,\ \P{\rm -a.e.}.$$ 
 Combining (\ref{17093}) with (\ref{17094}) and passing to the limit as $k\to\infty$ in (\ref{1709b}) for the subsequence $(\Phi_{n_k})_{k\in\N}$ we have
  \begin{equation}\label{0512}
  \langle X(t),\Phi(t)u\rangle=\int_0^t\Phi(r)\langle AdW(r),u\rangle
  +\int_0^t\big(\Phi(r)\langle X(r),L^*(r)u\rangle+\Phi'(r)\langle X(r),u\rangle \big)dr\ \ \P-{\rm a.e.},
 \end{equation} 
 i.e.~(\ref{ptlemma}) is shown for the case $\varrho=\Phi u$.
  
 Now let $\varrho\in C_w^1$ be general. Since $\big(\D^*,\langle\cdot,\cdot\rangle_{\D^*}\big)$ is a separable Hilbert space, we can choose an the orthonormal basis $\{e_k |~ k\in\N\}$ of $\D^*$. Since $\varrho(r)$ and $\varrho_{\D^*}'(r)$ are in $\D^*$ for a.e.~$r\in[0,T]$ we have $\varrho=\sum_{k=1}^\infty \left\langle \varrho,e_k\right\rangle_{\D^*} e_k\quad \text{and}\quad \varrho'_{\D^*}=\sum_{k=1}^\infty {\left\langle \varrho'_{\D^*},e_k\right\rangle_{\D^*} e_k}.$ Set $\varrho_{\D^*}^{(n)}=\sum_{k=1}^n \left\langle \varrho,e_k\right\rangle_{\D^*} e_k$\ and\ $\varrho'^{(n)}_{\D^*}=\sum_{k=1}^n \left\langle \varrho'_{\D^*},e_k\right\rangle_{\D^*} e_k.$ We have $\lim_{n\to\infty}\varrho_{\D^*}^{(n)}(r)= \varrho(r)\ \text{ and}\ \lim_{n\to\infty}\varrho'^{(n)}_{\D^*}(r) = \varrho'_{\D^*}(r)$ w.r.t.~$\|\cdot\|_{\D^*}$ for a.e.~$r\in[0,T]$. By (\ref{0512}), the linearity of the inner product, and the integrands together with \ref{07112}(ii), (\ref{ptlemma}) is also satisfied for the case $\varrho(r)=\varrho_{\D^*}^{(n)}(r)$, i.e., for  $\P$-a.e.~we have 
                \begin{equation} \label{ptlemma1}
                    \big\langle X(t), \varrho_{\D^*}^{(n)}(t)\big\rangle_H = \int_0^t\big\langle X(r), \varrho'^{(n)}_{\D^*}(r)+L^*(r)\varrho_{\D^*}^{(n)}(r)\big\rangle_H dr + \int_0^t\big\langle AdW(r),\varrho_{\D^*}^{(n)}(r)\big\rangle_H.
             \end{equation}
   We need to pass to the limit of (\ref{ptlemma1}) as $n\to\infty$. By It{\^o}'s isometry we have 
   \begin{multline}\label{16b}
  \E\Big\|\int_0^t\big\langle \varrho_{\D^*}^{(n)}(r)-\varrho(r),AdW(r)\big\rangle\Big\|^2 = \int_0^t\big\|\big\langle \varrho_{\D^*}^{(n)}(r)-\varrho(r),AQ^{\frac{1}{2}}\big\rangle\big\|^2_{L_2(G,\R)}dr \\
\leq\|AQ^{\frac{1}{2}}\|^2_{L(G,H)}\int_0^t\big\|\varrho_{\D^*}^{(n)}(r)-\varrho(r)\big\|^2_Hdr.
\end{multline} 
Since $(\varrho_{\D^*}^{(n)}(r))_{n\in\N}$ converges to $\varrho(r)$ w.r.t.~$\|\cdot\|_{\D^*}$, which is stronger than $\|\cdot\|_H$,
together with the integrability of $\|\varrho\|^2_{\D^*}\geq \|\varrho_{\D^*}^{(n)}\|^2_{\D^*}\geq \|\varrho_{\D^*}^{(n)}\|^2_H$,
we obtain that the estimator in (\ref{16b}) converges to zero as $n \to \infty$. Hence we can find a subsequence
$(\varrho_{\D^*}^{(n_k)}(r))_{k\in\N}$ of $(\varrho_{\D^*}^{(n)}(r))_{n\in\N}$ such that
$$\int_0^t\big\langle \varrho_{\D^*}^{(n_k)}(r),AdW(r)\big\rangle \longrightarrow \int_0^t\big\langle \varrho(r),AdW(r)\big\rangle\quad \P-\text{a.e.\ as}\ k\to\infty.$$   
We denote $\varrho_{\D^*}^{(n_k)}$ again by $\varrho_{\D^*}^{(n)}$. Since $\lim_{n\to\infty}\varrho_{\D^*}^{(n)}= \varrho$\ and\ $\lim_{n\to\infty}\varrho'^{(n)}_{\D^*}= \varrho'_{\D^*}$ w.r.t.~$\|\cdot\|_{\D^*}$, we have $\big\langle X(t),\varrho_{\D^*}^{(n)}(t)\big\rangle \rightarrow \big\langle X(t),\varrho(t)\big\rangle$\ and\ $\big\langle X(r),\varrho'^{(n)}_{\D^*}(r)\big\rangle \rightarrow \big\langle X(r),\varrho'_{\D^*}(r)\big\rangle$ as  $n\to\infty$ for a.e.~$r\in[0,T]$. Moreover, since $\varrho\in C_w^1([0,T], \D^*)$, for a.e.~$ r\in[0,T]$ we have $\big|\big\langle X(r),\varrho'^{(n)}_{\D^*}(r)\big\rangle\big|\leq \|X(r)\| \|\varrho'^{(n)}_{\D^*}(r)\|\leq C\|X(r)\| \|\varrho'^{(n)}_{\D^*}(r)\|_{\D^*} \leq C\|X(r)\|\|\varrho'_{\D^*}(r)\|_{\D^*}$. As before, the map $[0,T]\ni r\mapsto\|X(r)\|^2\in\R$  is integrable and by \ref{07112}(ii) also the map $[0,T]\ni r\mapsto \|\varrho'_{\D^*}(r)\|^2_{\D^*}\in\R$\ is integrable. Hence, the map $[0,t]\ni r\mapsto\|X(r)\|\|\varrho'_{\D^*}(r)\|_{\D^*}\in\R$ is integrable for all $t\in[0,T]$ and by the dominated convergence theorem we get $$\int_{0}^t\left\langle X(r),\varrho'^{(n)}_{\D^*}(r)\right\rangle dr \longrightarrow \int_{0}^t\left\langle X(r),\varrho'_{\D^*}(r)\right\rangle dr\quad\text{as}\ n\to\infty.$$    
   Further we need to check that $\lim_{n\to\infty}\int_{0}^t\big\langle X(r),L^*(r)\varrho_{\D^*}^{(n)}(r)\big\rangle dr = \int_{0}^t\left\langle X(r),L^*(r)\varrho(r)\right\rangle dr.$ For a.e.~$r\in[0,T]$ we have
      \begin{equation*}
           \begin{split}
              \big|\big\langle X(r),L^*(r)\varrho_{\D^*}^{(n)}(r)\big\rangle\big|&\leq \|X(r)\| \|L^*(r)\varrho_{\D^*}^{(n)}(r)\|\leq \|X(r)\| \|\varrho_{\D^*}^{(n)}(r)\|_{D(L^*(r))}\\
                                                         &\leq C_2 \|X(r)\| \|\varrho_{\D^*}^{(n)}(r)\|_{\D^*}\leq C_2\|X(r)\|\|\varrho(r)\|_{\D^*}.
           \end{split}
    \end{equation*}     
      By assumption, the maps $[0,T]\ni r\longmapsto \|\varrho(r)\|_{\D^*}^2\in\R$ and $[0,T]\ni r\longmapsto \|X(r)\|^2\in\R$ are integrable, hence also $[0,T]\ni r\longmapsto \|X(r)\|\|\varrho(r)\|_{\D^*}\in\R$ is integrable.
      Moreover, 
       \begin{equation*}
           \begin{split}
      \|L^*(r)\varrho_n(r)-L^*(r)\varrho(r)\|&=\|L^*(r)(\varrho_n(r)-\varrho(r))\|\\
                                       &\leq\ \|\varrho_n(r)-\varrho(r)\|_{D(L^*(r))}\leq\ C_2\|\varrho_n(r)-\varrho(r)\|_{\D^*} \xrightarrow{n\to\infty} 0.
         \end{split}
    \end{equation*}
     Hence, by the dominated convergence theorem we have $$\int_0^t\left\langle X(r),L^*(r)\varrho_n(r)\right\rangle dr\longrightarrow \int_0^t\left\langle X(r),L^*(r)\varrho(r)\right\rangle dr\quad\ \text{as}\ n\to\infty. $$
    
 Finally, taking the limit as $n\to\infty$ in (\ref{ptlemma1}), we obtain
     \begin{equation*}
        \left\langle X(t), \varrho(t)\right\rangle = \int_0^t\left\langle X(r), \varrho'_{\D^*}(r)+L^*(r)\varrho(r)\right\rangle dr + \int_0^t\left\langle AdW(r),\varrho(r)\right\rangle\quad \P{\rm -a.e.}.\qedhere
    \end{equation*}  
\end{proof}   
\begin{proof} [Proof of Theorem \ref{main}] Let $0\leq t\leq T$. Consider the map $[0,t]\ni r\longmapsto\varrho(r):=U^*(t,r)u\in \D^*,$ where $u\in Y^*$. By assumption, $\varrho\in C_w^1([0,t],\D^*)$ and $\varrho_{\D^*}'(r)= -L^*(r)\varrho(r)$\ in $H$ for a.e.~$r\in[0,T]$. Applying Proposition \ref{pm} to the case $\varrho(r)= U^*(t,r)u,\ u\in Y^*$, we have
$$\left\langle X(t),u\right\rangle = \int_{0}^t\Big\langle AdW(r),U^*(t,r)u\Big\rangle = \Big\langle \int_{0}^tU(t,r)AdW(r),u\Big\rangle \ \P{\rm -a.e.}.$$
Since $Y^*$ is a dense subspace of the separable Hilbert space H, we get $$X(t)=\int_{0}^tU(t,r)AdW(r)\quad \P{\rm -a.e.}.$$  The analytically weak solution is unique.
 \end{proof} 

   \section{Application: a SPDE from industrial mathematics}\label{apply}

Recall the stochastic partial differential equation \eqref{pdae} from the introduction:
\begin{multline}\label{ptae}
d \, \partial_t \x(s,t) = \big(\partial_s(\lambda\partial_s\x)(s,t) - b\partial_{ssss}\x(s,t) \\
-g\e_3+\f^{{\rm det}}(s,t)\big)dt+\sigma d\w(s,t), \quad (s,t)\in[0,l]\times [0,T],\tag{\ref{pdae}}
\end{multline}
with initial condition
\begin{equation}\label{inicont}
\x(s,0) = (s-l)\e_3,\quad \partial_t\x(s,0)=\0,\quad s \in[0,l], \tag{\ref{pdae}a}
\end{equation}
boundary condition
  \begin{equation}\label{boundcont}
    \x(l,t)=\0,\quad
    \partial_s\x(l,t)=\e_3,\quad
    \partial_{ss}\x(0,t)=\0,\quad
     \partial_{sss}\x(0,t)=\0,\quad t\in[0,T].\tag{\ref{pdae}b}
  \end{equation}

Let $X(t):=
   \dbinom{\x(t)}{\partial_t\x(t)}$, then 
 \begin{equation*}
   \begin{split}
   dX(t)=
   \dbinom{
   d\x(t)}{
   \partial_t \partial_t\x(t)}&=
   \begin{pmatrix}
     \partial_t\x(t)dt\\
      \big(\partial_s(\lambda(t)\partial_s\x(t)) - b\partial_{ssss}\x(t)+\f(t)\big)dt+d\w(t)
   \end{pmatrix}\\
   &=
   \Bigg(\begin{pmatrix}
   0& Id\\
   \partial_s(\lambda(t)\partial_s)-b\partial_{ssss}& 0
   \end{pmatrix}X(t)+\dbinom{0}{\f(t)}\Bigg)dt +\sigma d\dbinom{0}{\w(t) }.
  \end{split} 
 \end{equation*}  
 Hence (\ref{ptae}) becomes 
 \begin{equation}\label{SDE1}
 dX(t)=\big(L(t)X(t)+F(t)\big)dt + AdW(t),
 \end{equation}
 where
 \begin{equation*}
    L(t)= \begin{pmatrix}
   0& Id\\
   \tilde{L}(t)& 0
   \end{pmatrix},\ F(t)=\dbinom{0}{\f(t)},\  A=\sigma \begin{pmatrix}
   0& 0\\
   0& Id
   \end{pmatrix},\ W(t) = \dbinom{0}{\w(t)},   
 \end{equation*}   
 $\tilde{L}(t)=\partial_s(\lambda(t)\partial_s)-b\partial_{ssss} $ and $\f(t)=-g\e_3+\f^{{\rm det}}(t),\ 0\leq t\leq T$.

\subsection{The homogeneous problem}\label{homogen}

First we consider \eqref{ptae} with general initial condition
\begin{equation}\label{inicontt}
\x(s,0) = \boldsymbol{\xi}_1(s),\quad \partial_t\x(s,0)=\boldsymbol{\xi}_2(s),\quad s \in[0,l], \tag{\ref{ptae}g}
\end{equation}
and homogeneous boundary condition
  \begin{equation}\label{boundcontt}
    \x(l,t)=\0,\quad
    \partial_s\x(l,t)=\0,\quad
    \partial_{ss}\x(0,t)=\0,\quad
     \partial_{sss}\x(0,t)=\0,\quad t\in[0,T].\tag{\ref{ptae}h}
  \end{equation}
	We use the notation $L^2(0,l):= L^2((0,l);\R^3)$ (space of $\R^3$-valued functions, square integrable w.r.t.~the Lebesgue measure on $(0,l)$,
	equipped with its usual inner product) and consider $\tilde{L}(t): D(\tilde{L}(t))\subset L^2(0,l)\longrightarrow L^2(0,l)$
	with domain $$D(\tilde{L}(t)) := \big\{{\bf \v}\in H^{4,2}((0,l);\R^3) \big| \text{\ boundary conditions in (\ref{boundcontt}) are fulfilled}\big\}=: H_{{\rm bc}}^{4,2}(0,l)$$
	($H^{m,2}((0,l);\R^3)$ denotes the Hilbert space of $\R^3$-valued, $m$ times weakly differentiable functions on $(0,l)$
	which are square integrable together with their weak derivatives). Those domains are independent of $t\in[0,T]$. Let $$H_{{\rm bc}}^{2,2}(0,l)
	:=\big\{\u\in H^{2,2}((0,l);\R^3)\big|\text{\ first two boundary conditions in (\ref{boundcontt}) are fulfilled}\big\}$$ with inner product defined by $\langle \u,\v\rangle_{H_{{\rm bc}}^{2,2}(0,l)}:= b\int_0^l\langle\partial_{ss}\u,\partial_{ss}\v\rangle_{{\rm euk}} ds,\ \u,\v\in H_{{\rm bc}}^{2,2}(0,l).$ We consider the Hilbert space $H:=H_{{\rm bc}}^{2,2}(0,l)\times L^2(0,l)$ with inner product  $$\left\langle {\dbinom{\u_1}{\v_1},\dbinom{\u_2}{\v_2}}\right\rangle_H := \langle{\u_1,\u_2}\rangle_{H_{{\rm bc}}^{2,2}(0,l)}+\langle {\v_1,\v_2}\rangle_{L^2(0,l)},\quad \dbinom{\u_1}{\v_1},\dbinom{\u_2}{\v_2}\in H,$$ and the family of operators 
\begin{equation*}
  \begin{split}
     L(t): D(L(t))\subset H& \longrightarrow H\\
                                \dbinom{\u}{\v}& \longmapsto \dbinom{\v}{\tilde{L}(t)\u},
  \end{split}
\end{equation*}
   where the domains $D(L(t)):=H_{{\rm bc}}^{4,2}(0,l)\times H_{{\rm bc}}^{2,2}(0,l)=:\D$ are independent of $t\in[0,T]$. The inner product on $\D$ we define by 
   \begin{equation}\label{10114}
   \left\langle {\dbinom{\u_1}{\v_1},\dbinom{\u_2}{\v_2}}\right\rangle_\D := \langle{\u_1,\u_2}\rangle_{H_{{\rm bc}}^{4,2}(0,l)}+\langle {\v_1,\v_2}\rangle_{H_{{\rm bc}}^{2,2}(0,l)},\ \dbinom{\u_1}{\v_1},\dbinom{\u_2}{\v_2}\in \D,
 \end{equation}  
 where $\langle{\u_1,\u_2}\rangle_{H_{{\rm bc}}^{4,2}(0,l)}:=b^2\int_0^l\langle\partial_{ssss}\u_1,\partial_{ssss}\u_2\rangle_{{\rm euk}}ds$\ for all $\u_1,\u_2\in H^{4,2}_{{\rm bc}}(0,l)$. Of course, $(\D,\langle\cdot,\cdot\rangle_\D)$ is a separable Hilbert space with norm $\|\cdot\|_\D:=\sqrt{\langle\cdot,\cdot\rangle_\D}$. Furthermore, we also use
\begin{multline*}
H_{{\rm bc}}^{6,2}(0,l):=\big\{\u\in H^{6,2}((0,l);\R^3)\big| \\ \text{boundary conditions in \eqref{boundcontt}} \
\text{and}\ \partial_{ssss}\u(l)= \partial_{sssss}\u(l)=\0\ \text{hold}\big\}.
\end{multline*}

\begin{assumption}\label{ass}
	(i) $\lambda(t)\in H^{3,2}((0,l);\R)$ for all $t\in[0,T]$,\  $\sup_{t\in[0,T]}\|\lambda(t)\|_{H^{3,2}((0,l);\R)}<\infty$,
	$\lambda(0,t)=\lambda(l,t)=\partial_s\lambda(l,t)=\partial_s\lambda(0,t)=0,\ \lambda(s,t)>0$ for all $s\in(0,l),\ t\in[0,T]$,
	and $\lambda,$  $\partial_s\lambda$ are measurable on $[0,l]\times[0,T]$.

\noindent
(ii) The map $[0,T]\ni t\mapsto \f^{{\rm det}}(t)\in L^2(0,l)$ is Bochner integrable.

\noindent
(iii) $(\w(t))_{0\leq t\leq T}$ is an $L^2(0,l)$-valued $Q$-Wiener process with ${\rm Tr}(Q)<\infty$.

\end{assumption}

\begin{theorem} \label{12101} Let Assumption \ref{ass} hold. Then there exists a unique analytically weak solution to \eqref{ptae},
\eqref{inicontt}, \eqref{boundcontt} for all\ $\boldsymbol{\xi}_1 \in H_{{\rm bc}}^{6,2}(0,l)$ and\ $\boldsymbol{\xi}_2 \in H_{{\rm bc}}^{4,2}(0,l)$.
 \end{theorem}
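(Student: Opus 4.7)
The plan is to verify the hypotheses of Theorems \ref{existence} and \ref{main} for the first-order reformulation \eqref{SDE1}. Under Assumption \ref{ass}, parts (ii)--(iv) of the standing assumption of Section \ref{subsec21} are immediate, so the remaining work reduces to: (a) constructing an almost strong evolution system $(U(t,\tau))_{0 \le \tau \le t \le T}$ for the family $(L(t),\D)_{t\in[0,T]}$ with a dense initial subspace $Y$; (b) verifying the trace condition \eqref{07111}; and (c) verifying Assumption \ref{30111}. The hypothesized regularity $\boldsymbol{\xi}_1 \in H_{\rm bc}^{6,2}(0,l)$, $\boldsymbol{\xi}_2 \in H_{\rm bc}^{4,2}(0,l)$ is designed precisely so that $\xi = (\boldsymbol{\xi}_1,\boldsymbol{\xi}_2)$ lies in the regular subspace $Y := H_{\rm bc}^{6,2}(0,l) \times H_{\rm bc}^{4,2}(0,l)$ into which the evolution system will be built.

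For the construction of $(U(t,\tau))$, observe that for each fixed $t$ the operator $\tilde{L}(t) = \partial_s(\lambda(t)\partial_s\,\cdot\,) - b\partial_{ssss}$ on $H_{\rm bc}^{4,2}(0,l)$ is negative self-adjoint on $L^2(0,l)$: the bending part by the four boundary conditions in \eqref{boundcontt}, the tractive part by the vanishing of $\lambda$ and $\partial_s\lambda$ at the endpoints (Assumption \ref{ass}(i)). Hence $L(t)$ generates a $C_0$-group on $H = H_{\rm bc}^{2,2}(0,l) \times L^2(0,l)$, skew-adjoint with respect to the time-dependent energy inner product $\int(\lambda(t)|\partial_s\u|^2 + b|\partial_{ss}\u|^2 + |\v|^2)\,ds$; the uniform equivalence of this norm with the fixed norm of $H$, guaranteed by Assumption \ref{ass}(i), supplies Kato's stability condition. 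Invariance of $Y$ is checked by a direct computation: $\lambda(l)=\partial_s\lambda(l)=0$, $\partial_s\u(l)=0$, together with the extra conditions $\partial_{ssss}\u(l)=\partial_{sssss}\u(l)=0$ built into $H_{\rm bc}^{6,2}$, ensure $\tilde{L}(t)H_{\rm bc}^{6,2}(0,l) \subset H_{\rm bc}^{2,2}(0,l)$, and the $H^{3,2}$-bound on $\lambda$ makes this uniform in $t$. One then applies the Kato--Pazy construction (\cite[Thm.~5.4.3]{Pazy} or \cite[Thm.~1]{KatII}); because $\lambda$ is only measurable in $t$, a piecewise-constant-in-$t$ approximation is needed and the limit propagator satisfies Definition \ref{26101} with (iii) holding only a.e., i.e.~it is \emph{almost strong}. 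The trace condition \eqref{07111} then follows at once from $\|A\| \le \sigma$, ${\rm Tr}(Q) < \infty$ and $\sup_{\tau,t}\|U(t,\tau)\| < \infty$.

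For Assumption \ref{30111}(i) one identifies $\D^*$ by dualising the $H$-inner product: because the $H_{\rm bc}^{2,2}$-part already carries two spatial derivatives, the formal adjoint $L^*(t)$ has a domain whose boundary conditions differ from those defining $\D$; equipping $\D^*$ with a natural $H^{4,2}\times H^{2,2}$-type Hilbert norm yields a separable Hilbert space that uniformly dominates $\|\cdot\|_{D(L^*(r))}$ thanks to the $H^{3,2}$-bound on $\lambda$. Part (ii), the backward differentiability $\partial_\tau U(t,\tau)v = -U(t,\tau)L(\tau)v$ for $v \in \D$, follows because the underlying wave group is time-reversible, so the Kato--Pazy construction in fact yields a two-sided evolution on $Y$. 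For (iii), a natural candidate for $Y^*$ is the $H^{6,2}\times H^{4,2}$-analogue of $Y$ with the adjoint boundary conditions, on which $U^*(t,\cdot)u$ inherits the $C_w^1$-regularity by transposition of the forward argument. I expect the \emph{main obstacle} to be the precise bookkeeping of the adjoint boundary conditions defining $\D^*$ and $Y^*$: because the inner product on $H$ already involves two derivatives, $\D^*$ and $Y^*$ are not verbatim copies of $\D$ and $Y$, and one must separately verify density in $H$, the uniform graph bound, and invariance under the backward flow before Theorem \ref{main} can be invoked. A secondary technical point is the limit passage from piecewise-constant $\lambda$ in $t$ to merely measurable $\lambda$, which is the very reason for introducing the weaker \emph{almost strong} notion in Definition \ref{26101}.
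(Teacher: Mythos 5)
Your high-level plan (reduce to Theorems \ref{existence} and \ref{main}, construct an almost strong evolution system, check \eqref{07111} and Assumption \ref{30111}) is exactly the paper's, but the two places where all the work sits contain genuine gaps. First, the stability argument: uniform equivalence of the time-dependent energy norms $\int(\lambda(t)|\partial_s\u|^2+b|\partial_{ss}\u|^2+|\v|^2)ds$ with the fixed $H$-norm does \emph{not} supply Kato's stability condition. Stability requires a bound on products $\|(\alpha-L(t_k))^{-1}\cdots(\alpha-L(t_1))^{-1}\|$ that is uniform in the number of factors; switching from the norm $\|\cdot\|_{t_j}$ to $\|\cdot\|_{t_{j+1}}$ costs a fixed constant $M>1$ at every step unless $t\mapsto\lambda(\cdot,t)$ has bounded variation (Kato's condition $\|u\|_t\le e^{c|t-s|}\|u\|_s$), and Assumption \ref{ass}(i) only gives measurability in $t$. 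For the same reason the piecewise-constant approximation does not obviously pass to the limit in a way that preserves $U(t,\tau)(\D)\subset\D$ and the integral identity \eqref{30101}, nor Assumption \ref{30111}(ii). The paper avoids both problems by a different decomposition: $L(t)=L_0+L_1(t)$ with $L_0$ skew-adjoint w.r.t.\ the \emph{fixed} inner product (hence a contraction group) and $L_1(t)=\bigl(\begin{smallmatrix}0&0\\ \partial_s(\lambda(t)\partial_s)&0\end{smallmatrix}\bigr)$ uniformly bounded on $H$ \emph{and on} $\D$ (Lemma \ref{lm1}); stability is then the bounded-perturbation theorem, and the forward differentiability and $\D$-invariance of $U(t,\tau)$ come from a Banach fixed-point (Duhamel) argument in $C([\tau,T],\D)$ (Proposition \ref{pp1}), for which measurability of $\lambda$ in $t$ suffices. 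If you want your route to work you must either strengthen the $t$-regularity of $\lambda$ or import this perturbation structure.

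Second, your description of the adjoint is wrong in a way that matters for Assumption \ref{30111}: the inner product $b\int\langle\partial_{ss}\cdot,\partial_{ss}\cdot\rangle_{\rm euk}ds+\langle\cdot,\cdot\rangle_{L^2}$ on $H$ is chosen precisely so that $L_0$ is skew-adjoint, so the adjoint boundary conditions coincide with those of $\D$ and $\D^*=D(L_0^*)=D(L_0)=\D$ (the perturbation $L_1^*(t)$ is bounded and does not change the domain). There is no separate ``adjoint'' version of the boundary conditions to bookkeep. Conversely, the part you dismiss as ``transposition of the forward argument'' is where the real difficulty lies: to get Assumption \ref{30111}(iii) one must construct a backward evolution system generated by $L^*(-t)$, identify it with $(U^*(t,\tau))$ by a duality/differentiation argument, show $U^*(t,\tau)$ preserves $Y^*:=D(L_0^2)=H^{6,2}_{\rm bc}(0,l)\times H^{4,2}_{\rm bc}(0,l)$ with uniform bounds, and only then deduce the $C^1_w([0,T],\D^*)$-regularity (Propositions \ref{pp10}--\ref{0911p}). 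Note also that in the paper the space $H^{6,2}_{\rm bc}\times H^{4,2}_{\rm bc}$ enters as $Y^*$ for the uniqueness theorem (the initial space $Y$ of the forward system can be taken to be all of $\D$), not as the initial space $Y$ as you propose; your choice is admissible but is not what forces the stated regularity of $(\boldsymbol{\xi}_1,\boldsymbol{\xi}_2)$ in the paper's argument.
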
 
Before we can prove Theorem \ref{12101} we need several lemmas and propositions to construct an almost strong evolution system having sufficient properties to apply our result from Sections \ref{subsec21} and \ref{subsec22}. We decompose $L(t)$ as follows
  $$L(t)= \begin{pmatrix}
   0& Id\\
   -b\partial_{ssss}& 0
   \end{pmatrix}+ \begin{pmatrix}
   0& 0\\
   \partial_s(\lambda(t)\partial_s)& 0
   \end{pmatrix}=:L_0+L_1(t),\quad 0\leq t\leq T.$$
 \begin{lemma}\label{lm1}
Let Assumption \ref{ass}(i) hold. Then we have:\\
(i) The operator $L_1(t)$  is in $L(H)$ and $L(\D)$ for every\ $t\in[0,T]$ and there exist $0<C_4,C_5<\infty$ such that  $\sup_{t\in[0,T]}\|L_1(t)\|_{L(H)}\leq C_4$ and $\sup_{t\in[0,T]}\|L_1(t)\|_{L(\D)}\leq C_5$.\\
(ii) The operator $L(t)$  is in $L(\D,H)$ for every\ $t\in[0,T]$ and the map \ $[0,T]\ni t\longmapsto L(t)\in L(\D,H)$ is bounded and measurable.
 \end{lemma}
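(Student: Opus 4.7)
The plan is to expand $L_1(t)\dbinom{\u}{\v}=\dbinom{0}{\partial_s\lambda(t)\,\partial_s\u+\lambda(t)\,\partial_{ss}\u}$ and, for the $L(\D)$ bound, to work out $\partial_{sss}(\lambda(t)\partial_s\u)$ via the Leibniz rule. The key analytic tools are the one-dimensional Sobolev embedding $H^{k,2}((0,l);\R)\hookrightarrow C^{k-1}([0,l])$, which thanks to Assumption \ref{ass}(i) gives $\lambda(t),\partial_s\lambda(t),\partial_{ss}\lambda(t)\in L^\infty(0,l)$ and $\partial_{sss}\lambda(t)\in L^2(0,l)$ with uniform-in-$t$ bounds by $C\sup_t\|\lambda(t)\|_{H^{3,2}}$, together with Poincaré-type inequalities that exploit the boundary conditions in \eqref{boundcontt}: the conditions $\u(l)=\partial_s\u(l)=0$ yield $\|\partial_s^k\u\|_{L^2}\le C\|\partial_{ss}\u\|_{L^2}$ for $k\le 1$ on $H^{2,2}_{\mathrm{bc}}(0,l)$, and the additional conditions $\partial_{ss}\u(0)=\partial_{sss}\u(0)=0$ chain further to $\|\partial_s^k\u\|_{L^2}\le C\|\partial_{ssss}\u\|_{L^2}$ for $k\le 3$ on $H^{4,2}_{\mathrm{bc}}(0,l)$.

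For $L_1(t)\in L(H)$ I would simply estimate
\[
\|\partial_s(\lambda(t)\partial_s\u)\|_{L^2}\le \|\partial_s\lambda(t)\|_\infty\|\partial_s\u\|_{L^2}+\|\lambda(t)\|_\infty\|\partial_{ss}\u\|_{L^2}\le C_4\|\partial_{ss}\u\|_{L^2},
\]
uniformly in $t$. For $L_1(t)\in L(\D)$ I first verify the invariance $L_1(t)\D\subset\D$: writing $w:=\partial_s(\lambda(t)\partial_s\u)$, direct evaluation using $\lambda(l,t)=\partial_s\lambda(l,t)=0$ together with $\partial_s\u(l)=0$ gives $w(l)=\partial_s\lambda(l,t)\partial_s\u(l)+\lambda(l,t)\partial_{ss}\u(l)=0$ and $\partial_s w(l)=\partial_{ss}\lambda(l,t)\partial_s\u(l)+2\partial_s\lambda(l,t)\partial_{ss}\u(l)+\lambda(l,t)\partial_{sss}\u(l)=0$, so $w\in H^{2,2}_{\mathrm{bc}}(0,l)$. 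The $\D$-norm bound then reduces to estimating $\|\partial_{sss}(\lambda(t)\partial_s\u)\|_{L^2}$; the four Leibniz summands are handled by pairing the single $L^2$-factor $\partial_{sss}\lambda(t)$ with $\partial_s\u\in L^\infty$ (Sobolev embedding since $\partial_s\u\in H^{3,2}$), and the $L^\infty$-factors $\lambda,\partial_s\lambda,\partial_{ss}\lambda$ with $L^2$-factors $\partial_s^k\u$ for $k=2,3,4$. Chained Poincaré delivers the uniform bound $C_5$.

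Part (ii) is then essentially a corollary. The constant piece $L_0:\D\to H$, $\dbinom{\u}{\v}\mapsto\dbinom{\v}{-b\partial_{ssss}\u}$, satisfies $\|L_0\dbinom{\u}{\v}\|_H^2=\|\v\|_{H^{2,2}_{\mathrm{bc}}(0,l)}^2+b^2\|\partial_{ssss}\u\|_{L^2}^2=\|\dbinom{\u}{\v}\|_\D^2$, so $L_0\in L(\D,H)$ with norm one. Combining with part (i) (via the continuous inclusion $\D\hookrightarrow H$, which is immediate from the chained Poincaré estimates above) yields $L(t)=L_0+L_1(t)\in L(\D,H)$ with $\sup_t\|L(t)\|_{L(\D,H)}<\infty$. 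For measurability, the joint measurability of $\lambda$ and $\partial_s\lambda$ on $[0,l]\times[0,T]$ from Assumption \ref{ass}(i) implies, via Fubini and Pettis' theorem, that $t\mapsto L(t)\dbinom{\u}{\v}\in H$ is measurable for every fixed $\dbinom{\u}{\v}\in\D$; since $\D$ and $H$ are separable, this suffices in the sense of \cite{DaPr}.

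The main obstacle is the invariance check $L_1(t)\D\subset\D$: it relies delicately on the vanishing of $\lambda$ and $\partial_s\lambda$ at $s=l$ coupled with the first two boundary conditions on $\u$, without which $\partial_s w(l)$ would in general fail to vanish. The remaining pieces are bookkeeping with Leibniz, Sobolev embedding, and chained Poincaré inequalities.
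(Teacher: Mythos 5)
Your proposal is correct and follows essentially the same route as the paper: split $L_1(t)$ off from $L_0$, bound $\|\partial_s(\lambda\partial_s\u)\|_{L^2}$ using the $H^{3,2}$-regularity of $\lambda$ together with the boundary conditions (the paper pairs $\partial_s\lambda\in L^2$ with $\partial_s\u\in L^\infty$ via the fundamental theorem of calculus where you use $\partial_s\lambda\in L^\infty$ and Poincar\'e, an immaterial difference), and deduce (ii) from the continuity of $\D\hookrightarrow H$ and measurability of $\lambda,\partial_s\lambda$. In fact you supply details the paper only asserts -- notably the explicit verification that $w=\partial_s(\lambda\partial_s\u)$ satisfies $w(l)=\partial_s w(l)=\0$ (so $L_1(t)\D\subset\D$) and the Leibniz expansion behind the $L(\D)$ bound, which the paper dismisses with ``similarly to (\ref{19081})''.
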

 \begin{proof}
 (i): First, by using Sobolev embedding for $\lambda$ as in Assumption \ref{ass} we get that $\lambda$ has a twice continuously differentiable version on $[0,l]$. Since $\u\in H^{2,2}_{{\rm bc}}(0,l)$ we have that also $\partial_s u$ has a absolutely continuous version with weak derivative in $L^2(0,l)$. Using the fundamental theorem of calculus for absolutely continuous functions we get 
\begin{equation}\label{07091}
\sup_{s\in[0,l]}|\lambda(s,t)|^2\leq l\int_0^l(\partial_s\lambda(s,t))^2ds\quad
\text{ and}\quad \sup_{s\in[0,l]}\| \partial_s\u(s)\|_{{\rm euk}}^2\leq l\int_0^l\|\partial_{ss}\u(s)\|_{{\rm euk}}^2ds.
\end{equation}
Hence, for all $\dbinom{ \u}{\v}\in H$ we have by (\ref{07091}) together with Assumption \ref{ass}(i)
\begin{equation}\label{19081}
\begin{split}
\Big\|L_1(t)\dbinom{\u}{\v}\Big\|^2_H&=\|\partial_s(\lambda(t)\partial_s\u)\|^2_{L^2(0,l)}=\int_0^l\|\partial_s\lambda(s,t)\partial_s\u(s)+\lambda(s,t)\partial_{ss}\u(s)\|_{{\rm euk}}^2ds\\
&\hskip-1.1cm\leq 2\int_0^l(\partial_s\lambda(s,t))^2\|\partial_s\u(s)\|_{{\rm euk}}^2 ds+2\int_0^l(\lambda(s,t))^2\|\partial_{ss}\u(s)\|_{{\rm euk}}^2ds\\
&\hskip-1cm\leq4l\int_0^l(\partial_s\lambda(s,t))^2ds\int_0^l\|\partial_{ss}\u(s)\|_{{\rm euk}}^2ds \leq C_3\|\u\|^2_{H^{2,2}_{{\rm bc}}(0,l)} \leq C_4\Big\|\dbinom{\u}{\v}\Big\|_H^2
\end{split} 
\end{equation}
for some $0< C_4<\infty$\  independent of\ $t\in[0,T].$
Hence, $\sup_{t\in[0,T]}\|L_1(t)\|_{L(H)}\leq C_4.$

Second, by Assumption \ref{ass}(i), $L_1(t)(\D)\subset \D$ for all $t\in[0,T]$. Similarly to (\ref{19081}), we can prove that there exists $0<C_5<\infty$ such that $\|L_1(t)w\|_\D\leq C_5\|w\|_\D$ for all $t\in[0,T]$ and $w\in\D$, i.e., $\sup_{t\in[0,T]}\|L_1(t)\|_{L(\D)}\leq C_5$.

(ii): Since $Id: \D\longrightarrow H$ is continuous, together with Lemma \ref{lm1}(i) we obtain that $\sup_{t\in[0,T]}\|L(t)\|_{L(\D,H)}<\infty.$ Measurability of the map \ $[0,T]\ni t\longmapsto L(t)\in L(\D,H)$ follows by measurability of $\lambda$ and $\partial_s\lambda$.
 \end{proof} 
 \begin{lemma}\label{lm2}
 The operator $L_0:\D\subset H\longrightarrow H$ is closed, skew-adjoint, $L_0$ and $L_0^*$ are dissipative, and (hence) $(L_0,\D)$ generates a $C_0$-semigroup of contractions.
 \end{lemma}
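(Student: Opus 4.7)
The plan is to verify each of the four assertions in turn, with the substantive content being integration by parts for skew-symmetry and Lax--Milgram for the range condition. First, $\D = H^{4,2}_{\rm bc}(0,l)\times H^{2,2}_{\rm bc}(0,l)$ is dense in $H$, so $L_0$ is densely defined. For skew-symmetry on $\D$, I expand, for $w_i=\dbinom{\u_i}{\v_i}\in\D$,
\begin{equation*}
\langle L_0w_1,w_2\rangle_H+\langle w_1,L_0w_2\rangle_H = b\!\int_0^l\!\bigl(\partial_{ss}\v_1\partial_{ss}\u_2+\partial_{ss}\u_1\partial_{ss}\v_2\bigr)ds - b\!\int_0^l\!\bigl(\partial_{ssss}\u_1\,\v_2+\partial_{ssss}\u_2\,\v_1\bigr)ds,
\end{equation*}
and integrate each term in the second integral by parts twice. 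At $s=l$ the conditions $\v_j(l)=\partial_s\v_j(l)=0$ built into $H^{2,2}_{\rm bc}$ kill one pair of boundary contributions, at $s=0$ the conditions $\partial_{ss}\u_i(0)=\partial_{sss}\u_i(0)=0$ built into $H^{4,2}_{\rm bc}$ kill the other, and the two remaining bulk integrals cancel the first sum exactly. Thus $L_0\subset-L_0^*$; in particular $\mathrm{Re}\langle L_0w,w\rangle_H=0$ for $w\in\D$, so $L_0$ is dissipative, and since $L_0^*$ extends $-L_0$, also $L_0^*$ is dissipative on $\D$.

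Next, I establish $\mathrm{Ran}(I\pm L_0)=H$. Given $(\f,\g)\in H$, the equation $(I-L_0)(\u,\v)=(\f,\g)$ reduces to $\v=\u-\f$ together with the fourth-order boundary value problem $b\partial_{ssss}\u+\u=\f+\g$ on $(0,l)$ subject to the four boundary conditions defining $H^{4,2}_{\rm bc}$. I solve this by Lax--Milgram on the energy space $V:=H^{2,2}_{\rm bc}(0,l)$ with the coercive continuous bilinear form $a(\u,\phi):=b\int_0^l\langle\partial_{ss}\u,\partial_{ss}\phi\rangle_{\rm euk}\,ds+\int_0^l\langle\u,\phi\rangle_{\rm euk}\,ds$; coercivity holds because the Poincar\'e-type bound (\ref{07091}) already makes $b\int|\partial_{ss}\cdot|^2$ control the full $H^{2,2}$-norm on $V$. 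Testing the weak solution $\u\in V$ against $\phi\in C_c^\infty((0,l);\R^3)$ yields $b\partial_{ssss}\u=\f+\g-\u\in L^2(0,l)$, so $\u\in H^{4,2}$; then testing against a general $\phi\in V$ and integrating by parts twice leaves only the boundary residuals at $s=0$, namely $-b\partial_{ss}\u(0)\partial_s\phi(0)+b\partial_{sss}\u(0)\phi(0)$, which must vanish for arbitrary $\phi(0),\partial_s\phi(0)\in\R^3$, forcing the two natural boundary conditions $\partial_{ss}\u(0)=\partial_{sss}\u(0)=0$. Since $\f\in H^{2,2}_{\rm bc}$ and $\u\in H^{4,2}_{\rm bc}$ share the essential conditions at $s=l$, $\v:=\u-\f\in H^{2,2}_{\rm bc}$. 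The identical argument with $-L_0$ yields $\mathrm{Ran}(I+L_0)=H$.

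Finally, closedness of $L_0$ is automatic from dissipativity together with the range condition (a dissipative operator $A$ with $\mathrm{Ran}(I-A)=H$ is closed). Lumer--Phillips applied to both $L_0$ and $-L_0$ then shows each generates a $C_0$-semigroup of contractions on $H$; together they form a $C_0$-group of isometries, whose generator is skew-adjoint by the Hilbert space version of Stone's theorem. Hence $L_0^*=-L_0$, which upgrades the dissipativity of $L_0^*$ to all of $\D$ and completes the lemma. The main obstacle is the range condition, and specifically the recovery of the two \emph{natural} boundary conditions at the free end $s=0$: this requires first bootstrapping to $H^{4,2}$-regularity directly from the PDE itself and then carefully reading off the boundary values from the variational identity, exploiting that $\phi(0)$ and $\partial_s\phi(0)$ remain unconstrained for test functions $\phi\in V$.
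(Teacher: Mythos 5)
Your proof is correct, but it takes a genuinely different route from the paper's. The paper proves closedness of $(L_0,\D)$ directly by a sequential argument (reconstructing $-b\partial_{ss}\u$ as a double antiderivative of the limit $\y_2$), then characterizes $D(L_0^*)$ by hand: it tests the adjoint identity with $\u=\0$, bootstraps the adjoint vector $\u_1$ to $H^{4,2}$ regularity, and reads off the natural boundary conditions at the free end by integration by parts, concluding $D(L_0^*)=\D$ and hence skew-adjointness; generation then follows from \cite[Corol.~1.4.4]{Pazy} (densely defined closed $A$ with $A$ and $A^*$ dissipative generates a contraction semigroup). You instead prove the range condition $\mathrm{Ran}(I\pm L_0)=H$ by solving the resolvent equation $b\partial_{ssss}\u+\u=\f\pm\g$ via Lax--Milgram on $H^{2,2}_{\rm bc}(0,l)$, recover the natural boundary conditions at $s=0$ from the variational identity, obtain closedness for free from maximal dissipativity, and deduce skew-adjointness from the resulting $C_0$-group of surjective isometries (Stone). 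The analytic core is the same in both arguments --- an $H^{4,2}$ regularity bootstrap from a variational identity followed by identification of the natural boundary conditions $\partial_{ss}(0)=\partial_{sss}(0)=\0$ --- but it is deployed in different places: in the adjoint domain for the paper, in the resolvent equation for you. Your route buys explicit solvability of $(I\pm L_0)w=f$ and the identification of the semigroup as a group of orthogonal operators; the paper's route is more economical in that it never has to solve a boundary value problem, only to constrain the adjoint domain. One small point of care: your intermediate remark that ``$L_0^*$ is dissipative on $\D$'' is not yet the lemma's claim (which concerns $L_0^*$ on its full domain), but since you ultimately establish $L_0^*=-L_0$ this is repaired at the end, exactly as you note.
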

 \begin{proof}
 To prove that $(L_0,\D)$ is closed, we choose arbitrary $\u_n\rightarrow \u$ in $H_{{\rm bc}}^{2,2}(0,l)$ and $\v_n\rightarrow \v$ in $L^2(0,l)$ such that $\v_n\longrightarrow \y_1$ in $H_{{\rm bc}}^{2,2}(0,l)$ and $-b\partial_{ssss}\u_n\rightarrow \y_2$ in $L^2(0,l)$ as $n\to\infty$.
 
  Since $\v_n\rightarrow \y_1$ in $H_{{\rm bc}}^{2,2}(0,l)$ and the norm $\|\cdot\|_{H^{2,2}_{{\rm bc}}(0,l)}$ is stronger than the norm $\|\cdot\|_{L^2(0,l)}$ then $\v_n\rightarrow \y_1$ in $L^2(0,l)$ as $n\to\infty$. Hence, $\v=\y_1\in H^{2,2}_{{\rm bc}}(0,l)$. Define $\tilde{H}^{2,2}_{{\rm bc}}(0,l):=\big\{ \u\in H^{2,2}((0,l);\R^3)\big|\ \u(0)=\partial_s\u(0)=\0\big\}$ and the norm on $\tilde{H}^{2,2}_{{\rm bc}}(0,l)$ is as on $H^{2,2}_{{\rm bc}}(0,l)$. Since $\u_n\rightarrow \u$ in $H_{{\rm bc}}^{2,2}(0,l)$, we have $\partial_{ss}\u_n\rightarrow \partial_{ss}\u$ in $L^2(0,l)$ as $n\to\infty$. Moreover, since $\y_2\in L^2(0,l)$  we have $\z(s):=\int_0^s\int_0^{s_1}\y_2(s_2)ds_2ds_1\in \tilde{H}^{2,2}_{{\rm bc}}(0,l)$. Because $-b\partial_{ssss}\u_n\rightarrow \y_2$ in $L^2(0,l)$, $-b\partial_{ss}\u_n\rightarrow \z$ in $\tilde{H}^{2,2}_{{\rm bc}}(0,l)$ as $n\to\infty$. Since $\partial_{ss}\u_n\rightarrow \partial_{ss}\u$ in $L^2(0,l)$ as $n\to\infty$, we have $-b\partial_{ss}\u=\z\in \tilde{H}^{2,2}_{{\rm bc}}(0,l)$. Hence, $\u\in H^{4,2}_{{\rm bc}}(0,l)$ and $-b\partial_{ssss}\u=\partial_{ss}\z=\y_2.$ Combining with $\v=\y_1\in H_{{\rm bc}}^{2,2}(0,l)$, we can conclude that $(L_0,\D)$ is closed.
  
  To prove $(L_0,\D)$ is skew-adjoint, we need to obtain that $L_0$ is skew-symmetric and $D(L_0^*)=D(L_0)=\D$. Indeed, one can check easily that $L_0^*\big|_{\D}=-L_0$, i.e.~$L_0$ is skew-symmetric.
  
  Next, we shall find $\dbinom{ \u_1}{\v_1}, \dbinom{ \u_2}{\v_2}\in H$ such that for all $\dbinom{ \u}{\v}\in \D,$
\begin{equation}\label{ss} \left\langle L_0\dbinom{ \u}{\v}, \dbinom{ \u_1}{\v_1}\right\rangle_H=\left\langle\dbinom{ \u}{\v},\dbinom{ \u_2}{\v_2}\right\rangle_H.
\end{equation}
Equation (\ref{ss}) is equivalent to
\begin{equation}\label{ss1} 
 b\int_0^l \langle\partial_{ss}\v,\partial_{ss}\u_1\rangle_{{\rm euk}}ds-b\int_0^l \langle\partial_{ssss}\u,\v_1\rangle_{{\rm euk}}ds=b\int_0^l \langle\partial_{ss}\u,\partial_{ss}\u_2\rangle_{{\rm euk}}ds+\int_0^l \langle\v,\v_2\rangle_{{\rm euk}}ds
\end{equation}
for all\ $\dbinom{ \u}{\v}\in \D.$
For $\u=\0$, from (\ref{ss1}) we have 
\begin{equation}\label{ss2}
b\int_0^l \langle\partial_{ss}\v,\partial_{ss}\u_1\rangle_{{\rm euk}}ds=\int_0^l \langle\v,\v_2\rangle_{{\rm euk}}ds \quad \text{for all}\ \v\in H^{2,2}_{{\rm bc}}(0,l),
\end{equation}
hence, in particular, for all $\v\in C^\infty_c(0,l)$ (space of $\R^3$-valued $C^\infty$ functions with compact support in $(0,l)$).
Since $\v_2\in L^2(0,l)$ we have that $\partial_{ss}\u_1$ is continuously differentiable with $\partial_{sss}\u_1\in L^2(0,l)$ and $\partial_{sss}\u_1$ is a.e.~differentiable with $\partial_{ssss}\u_1\in L^2(0,l)$, see \cite[Theo.~2.4.2]{Mik}. Hence, $\u_1\in H^{4,2}((0,l);\R^3)$. We check now the boundary conditions of $\u_1$. Since\ $\partial_{s}\v(l)=\v(0)=\0$ and \ $b\partial_{ssss}\u_1=\v_2$\ a.e.~on $(0,l)$, two integration by parts yield
\begin{multline*}
 \hskip-0.5cm b\int_0^l \langle\partial_{ss}\v,\partial_{ss}\u_1\rangle_{{\rm euk}}ds=b\Big(\langle\partial_{s}\v(0),\partial_{ss}\u_1(0)\rangle_{{\rm euk}}-\langle\v(l),\partial_{sss}\u_1(l)\rangle_{{\rm euk}}+\int_0^l \langle\v,\partial_{ssss}\u_1\rangle_{{\rm euk}}ds\Big)\\               =b\big(\langle\partial_{s}\v(0),\partial_{ss}\u_1(0)\rangle_{{\rm euk}}-\langle\v(l),\partial_{sss}\u_1(l)\rangle_{{\rm euk}}\big)+\int_0^l \langle\v,\v_2\rangle_{{\rm euk}}ds,
  \end{multline*}
  for all $\v\in H_{{\rm bc}}^{2,2}(0,l)$. Comparing with (\ref{ss2}), for arbitrary $\v\in H_{{\rm bc}}^{2,2}(0,l)$ we obtain $\langle\partial_{s}\v(0),\partial_{ss}\u_1(0)\rangle_{{\rm euk}}-\langle\v(l),\partial_{sss}\u_1(l)\rangle_{{\rm euk}}=\0$. That implies $\partial_{ss}\u_1(0)=\partial_{sss}\u_1(l)=\0$, i.e.~$\u_1\in H_{{\rm bc}}^{4,2}(0,l).$
Similarly, we can identify $\v_1\in H_{{\rm bc}}^{2,2}(0,l).$ So, $D(L_0^*)\subset H_{{\rm bc}}^{4,2}(0,l)\times H_{{\rm bc}}^{2,2}(0,l)=D(L_0)$. We already know that  $L_0$ is skew-symmetric. Thus, $L_0$ is even skew-adjoint.

Clearly, $L_0$ and $L_0^*=-L_0$ are dissipative. Due to \cite[Corol.~1.4.4]{Pazy}, both $(L_0,\D)$ and $(L_0^*,\D)$ are generators of contraction semigroups.
 \end{proof}
 \begin{lemma} \label{lm3} Let Assumption \ref{ass}(i) hold, then on $\D$ is $\|\cdot\|_\D=\|L_0\cdot\|_H$. Moreover, there exist $0<c_6, C_6<\infty$ such that $$c_6\|\cdot\|_\D\leq\|\cdot\|_{D(L(t))} \leq C_6 \|\cdot\|_\D\quad \text{for all}\ t\in[0,T].$$
 \end{lemma}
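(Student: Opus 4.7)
The plan is to establish the two claims separately, with the equality $\|\cdot\|_\D = \|L_0\cdot\|_H$ being essentially a direct computation and the two-sided bound following from Lemma \ref{lm1}(i) together with a Poincaré-type embedding $\D \hookrightarrow H$.

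For the first claim, I would simply compute both sides. Given $w=\dbinom{\u}{\v}\in\D$, we have $L_0 w = \dbinom{\v}{-b\partial_{ssss}\u}$, so using the $H$-inner product,
\begin{equation*}
\|L_0 w\|_H^2 = \|\v\|_{H^{2,2}_{{\rm bc}}(0,l)}^2 + \|b\partial_{ssss}\u\|_{L^2(0,l)}^2 = b\int_0^l\|\partial_{ss}\v\|_{{\rm euk}}^2 ds + b^2\int_0^l\|\partial_{ssss}\u\|_{{\rm euk}}^2 ds,
\end{equation*}
which by the definition (\ref{10114}) of the $\D$-inner product equals $\|w\|_\D^2$. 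Note that this identity does not use Assumption \ref{ass}(i); the assumption enters only for the second claim.

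For the upper bound in the second claim, I would first establish that the identity map $\D \hookrightarrow H$ is continuous, i.e.\ there is a constant $C_7$ with $\|w\|_H \leq C_7 \|w\|_\D$. For $\u\in H^{4,2}_{{\rm bc}}(0,l)$ the boundary conditions $\partial_{ss}\u(0)=\partial_{sss}\u(0)=\0$ allow one to write $\partial_{sss}\u(s)=\int_0^s\partial_{ssss}\u(r)dr$ and then $\partial_{ss}\u(s)=\int_0^s\partial_{sss}\u(r)dr$, and iterating Cauchy--Schwarz gives $\|\partial_{ss}\u\|_{L^2(0,l)}^2 \leq l^3 \|\partial_{ssss}\u\|_{L^2(0,l)}^2$. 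Similarly, for $\v\in H^{2,2}_{{\rm bc}}(0,l)$ the conditions $\v(l)=\partial_s\v(l)=\0$ give $\|\v\|_{L^2(0,l)}^2 \leq l^4 \|\partial_{ss}\v\|_{L^2(0,l)}^2$ by the same iteration. Combining the two yields the embedding, and then by Lemma \ref{lm1}(i) and the first claim,
\begin{equation*}
\|L(t)w\|_H \leq \|L_0 w\|_H + \|L_1(t)w\|_H \leq \|w\|_\D + C_4 C_7 \|w\|_\D,
\end{equation*}
so $\|w\|_{D(L(t))}^2 = \|w\|_H^2 + \|L(t)w\|_H^2 \leq (C_7^2+(1+C_4 C_7)^2)\|w\|_\D^2$, giving $C_6$.

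For the lower bound, I would use the same decomposition in the other direction: $L_0 = L(t)-L_1(t)$, so by the first claim and Lemma \ref{lm1}(i),
\begin{equation*}
\|w\|_\D = \|L_0 w\|_H \leq \|L(t)w\|_H + C_4\|w\|_H \leq (1+C_4)\|w\|_{D(L(t))},
\end{equation*}
giving $c_6 = (1+C_4)^{-1}$, uniformly in $t\in[0,T]$. The main work is really the Poincaré-type embedding $\D \hookrightarrow H$; once that is in hand, everything else is a direct application of Lemma \ref{lm1}(i) and the triangle inequality. I do not expect any serious obstacle, since all boundary conditions needed for the iterated integration arguments are precisely those built into the definitions of $H^{2,2}_{{\rm bc}}(0,l)$ and $H^{4,2}_{{\rm bc}}(0,l)$.
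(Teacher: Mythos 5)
Your proposal is correct and follows essentially the same route as the paper: verify $\|\cdot\|_\D=\|L_0\cdot\|_H$ from the definitions, then use the decomposition $L(t)=L_0+L_1(t)$ with the uniform bound on $L_1(t)$ from Lemma \ref{lm1}(i) and the triangle inequality in both directions. The only difference is that you make explicit the Poincar\'e-type embedding $\|w\|_H\leq C_7\|w\|_\D$, which the paper uses implicitly in the step $(1+C_4)\|w\|+\|w\|_\D\leq C_6\|w\|_\D$; that is a welcome clarification, not a deviation.
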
  
 \begin{proof} We can check that $\|\cdot\|_\D=\|L_0\cdot\|_H$ on $\D$ just by using definition of the norms and $L_0$. Combining with Lemma \ref{lm1}, for all $w\in\D$ and $t\in[0,T]$ we have
  $$\|w\|_{D(L(t))}=\|w\|+\|L(t)w\|\leq \|w\|+\|L_1(t)w\|+\|L_0w\|\leq (1+C_4)\|w\|+\|w\|_\D\leq C_6\|w\|_\D,$$
 for some $0<C_6<\infty$ and $$\|w\|_\D=\|L_0w\|\leq \|L_1(t)w\|+\|L(t)w\|\leq C_4\|w\|+\|L(t)w\|\leq (1+C_4)C_6\|w\|_{D(L(t))}.$$
  Hence, $c_6\|\cdot\|_\D\leq\|\cdot\|_{D(L(t))} \leq C_6 \|\cdot\|_\D$ for all\ $t\in[0,T]$, where $c_6:=((1+C_4)C_6)^{-1}.$
 \end{proof}
 
\begin{proposition}\label{1809a} Let Assumption \ref{ass}(i) hold, then for every $t\in[0,T]$ the operator $\big(L(t), D(L(t))\big)$ is the generator of a $C_0$-semigroup on $H$.
\end{proposition}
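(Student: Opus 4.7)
The plan is to decompose $L(t) = L_0 + L_1(t)$ and invoke a standard bounded perturbation theorem for $C_0$-semigroup generators.

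First I would recall from Lemma \ref{lm2} that $(L_0, \D)$ generates a $C_0$-semigroup of contractions on $H$; in particular, $L_0$ is closed and densely defined with $D(L_0) = \D$. Next, I would use Lemma \ref{lm1}(i), which says that for each fixed $t \in [0,T]$ the operator $L_1(t)$ belongs to $L(H)$, so $L_1(t)$ is everywhere defined and bounded. Consequently, $D(L(t)) = D(L_0 + L_1(t)) = D(L_0) = \D$ as stated in the setup of the problem.

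Then I would apply the classical bounded perturbation theorem (see e.g.\ \cite[Theo.~3.1.1]{Pazy}): if $A$ generates a $C_0$-semigroup on $H$ and $B \in L(H)$, then $A + B$, with $D(A+B) = D(A)$, again generates a $C_0$-semigroup on $H$. Taking $A = L_0$ and $B = L_1(t)$ for the fixed $t$, it follows immediately that $(L(t), \D)$ generates a $C_0$-semigroup on $H$.

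This is a short and essentially formal step; there is no real obstacle. The substantive work has already been done in Lemmas \ref{lm1} and \ref{lm2}, where closedness, skew-adjointness, contractivity of the semigroup generated by $L_0$, and boundedness of $L_1(t)$ were established. The only point worth noting in writing it up is that the theorem yields a semigroup on $H$ whose growth bound can be estimated by $\|L_1(t)\|_{L(H)} \le C_4$ uniformly in $t$, which will be useful later when constructing the non-time-homogeneous evolution system corresponding to the family $(L(t))_{0 \le t \le T}$.
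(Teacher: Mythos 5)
Your proposal is correct and follows exactly the paper's argument: decompose $L(t)=L_0+L_1(t)$, combine Lemma \ref{lm2} (contraction semigroup generated by $L_0$) with Lemma \ref{lm1}(i) (uniform boundedness of $L_1(t)$ on $H$), and conclude via the bounded perturbation theorem \cite[Theo.~3.1.1]{Pazy}. No differences worth noting.
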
 
\begin{proof} 
As a consequence of Lemma \ref{lm1} and Lemma \ref{lm2}, for every $t\in[0,T]$ the operator $L(t)=L_0+L_1(t)$ on $D(L(t))$ generates a $C_0$-semigroup on $H$, see \cite[Theo.~3.1.1]{Pazy}.
\end{proof}
\begin{proposition} \label{1809b} Let Assumption \ref{ass}(i) hold, then the family $(L(t))_{0\leq t\leq T}$ is stable on $H$.
\end{proposition}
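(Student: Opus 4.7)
The plan is to apply Pazy's bounded-perturbation theorem for stable families (Theorem 5.2.3 in Pazy's book) to the decomposition $L(t)=L_0+L_1(t)$ introduced just before Lemma \ref{lm1}. The essential ingredients are already in place: Lemma \ref{lm2} gives that $(L_0,\D)$ generates a $C_0$-semigroup of contractions on $H$, and Lemma \ref{lm1}(i) provides that the perturbations $L_1(t)$ are in $L(H)$ with a uniform norm bound $\sup_{t\in[0,T]}\|L_1(t)\|_{L(H)}\leq C_4$. Proposition \ref{1809a} ensures that each $L(t)$ already generates a $C_0$-semigroup on $H$, so it makes sense to talk about stability of the family $(L(t))_{0\leq t\leq T}$ in the sense of Pazy.

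First I would observe that the constant family $\{L_0\}_{t\in[0,T]}$ is trivially stable with constants $(M,\omega)=(1,0)$: for any finite sequence $0\leq t_1\leq \dots\leq t_k\leq T$ and $s_j\geq 0$, the product of semigroups collapses, by the semigroup property of $e^{sL_0}$, to $e^{(\sum_j s_j)L_0}$, whose norm is bounded by $1$ since $L_0$ generates a contraction semigroup. Next I would invoke Pazy's perturbation theorem, which states that if a family of generators is stable with constants $(M,\omega)$ and is perturbed by a family of bounded operators with uniform norm bound $K$, then the perturbed family is again stable, with constants $(M,\omega+MK)$. Applying this with $M=1$, $\omega=0$ and $K=C_4$ yields that $(L(t))_{0\leq t\leq T}=(L_0+L_1(t))_{0\leq t\leq T}$ is stable on $H$ with constants $(1,C_4)$.

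I do not anticipate a genuine obstacle here, since the argument is a direct invocation of a standard result once the two preparatory lemmas are available. The only point that requires a brief sanity check is that the domain of $L(t)$ equals that of $L_0$ for every $t$ (so that $L_1(t)$ really is a bounded perturbation and $L(t)=L_0+L_1(t)$ on $\D$), but this is clear from the construction: $D(L(t))=\D=H^{4,2}_{\rm bc}(0,l)\times H^{2,2}_{\rm bc}(0,l)$ is independent of $t$, and $L_1(t)$ is defined on all of $H$. Hence the hypotheses of Pazy's perturbation theorem are literally met, and stability of $(L(t))_{0\leq t\leq T}$ follows.
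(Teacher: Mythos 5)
Your argument is correct and is essentially identical to the paper's proof: the paper also invokes the uniform bound $\sup_{t\in[0,T]}\|L_1(t)\|_{L(H)}\leq C_4$ from Lemma \ref{lm1}(i) together with the fact that $L_0$ generates a contraction semigroup, and applies Pazy's perturbation theorem for stable families (Theorem 5.2.3) to conclude stability with constants $1$ and $C_4$. Your additional remarks on the trivial stability of the constant family $\{L_0\}$ and on the coincidence of domains are sound and simply make explicit what the paper leaves implicit.
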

\begin{proof}
By (\ref{19081}), $\|L_1(t)\|$ is uniformly bounded on $[0,T]$. Moreover, $L_0$ is a generator of $C_0$-semigroup of contractions, then by \cite[Theo.~5.2.3]{Pazy} the family $(L(t))_{0\leq t\leq T}$ is stable in $H$ with stability constants $1, C_4$.
\end{proof}
\begin{proposition}\label{1809c}
Let Assumption \ref{ass}(i) hold. Then $\D$ is $L(t)$-admissible for all $t\in[0,T]$ and the family $(\widehat{L}(t))_{0\leq t\leq T}$ of parts $\widehat{L}(t)$ of $L(t)$ in $\D$ is stable in $\D$.
\end{proposition}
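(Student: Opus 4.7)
The plan is to show both claims at once by proving that the part $\hat{L}(t)$ of $L(t)$ in $\D$ generates a $C_0$-semigroup on $(\D,\|\cdot\|_\D)$, then invoke the stability criterion for bounded perturbations. First, I would handle $L_0$ alone: since $L_0$ is skew-adjoint on $H$ with $D(L_0)=\D$ by Lemma \ref{lm2}, the contraction semigroup $(e^{sL_0})_{s\ge 0}$ commutes with $L_0$ on $\D$, so $e^{sL_0}\D\subset\D$. Using the identity $\|\cdot\|_\D=\|L_0\cdot\|_H$ from Lemma \ref{lm3} I would then compute, for $w\in\D$,
\[
\|e^{sL_0}w\|_\D=\|L_0 e^{sL_0}w\|_H=\|e^{sL_0}L_0 w\|_H\le\|L_0 w\|_H=\|w\|_\D
\]
and
\[
\|e^{sL_0}w-w\|_\D=\|e^{sL_0}L_0 w-L_0 w\|_H\xrightarrow{s\downarrow 0}0,
\]
which shows that $\D$ is $L_0$-admissible and, by \cite[Theo.~4.5.5]{Pazy}, that the part $\hat{L}_0$ of $L_0$ in $\D$ generates a contraction $C_0$-semigroup on $(\D,\|\cdot\|_\D)$.

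Second, by Lemma \ref{lm1}(i) the operators $L_1(t)$ map $\D$ into $\D$ with $\sup_{t\in[0,T]}\|L_1(t)\|_{L(\D)}\le C_5$, so the bounded-perturbation theorem \cite[Theo.~3.1.1]{Pazy} applies on $\D$ and yields that $\hat{L}_0+L_1(t)$ generates a $C_0$-semigroup on $\D$ for each $t\in[0,T]$. Since $D(L(t))=\D$ and $L_1(t)\D\subset\D$, the part of $L(t)$ in $\D$ has domain $\{w\in\D:\,L_0 w\in\D\}=D(\hat{L}_0)$ and acts as $\hat{L}_0+L_1(t)$, so $\hat{L}(t)=\hat{L}_0+L_1(t)$. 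Applying \cite[Theo.~4.5.5]{Pazy} in the reverse direction I obtain that $\D$ is $L(t)$-admissible for every $t\in[0,T]$.

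Finally, for stability I would mimic the proof of Proposition \ref{1809b}: $\hat{L}_0$ generates a contraction semigroup on $(\D,\|\cdot\|_\D)$ and $\sup_{t\in[0,T]}\|L_1(t)\|_{L(\D)}\le C_5$, so \cite[Theo.~5.2.3]{Pazy} furnishes stability of $(\hat{L}(t))_{0\le t\le T}$ in $\D$ with stability constants $(1,C_5)$. The heart of the argument is the identity $\|\cdot\|_\D=\|L_0\cdot\|_H$ provided by Lemma \ref{lm3}: it is what makes $L_0$ an isometry from $\D$ into $H$ and thereby transfers the contraction property and strong continuity of the $L_0$-semigroup from $H$ to the graph norm $\|\cdot\|_\D$. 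Once this is in place, the admissibility step is essentially free, and everything else reduces to the general perturbation and stability toolkit from \cite{Pazy}.
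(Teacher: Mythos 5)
Your proof is correct, but it reaches both conclusions by a genuinely different route than the paper. For admissibility, the paper argues directly on $L(t)$: since $\D=D(L(t))$ is invariant under the semigroup generated by $L(t)$ and, by Lemma \ref{lm3}, the graph norms $\|\cdot\|_{D(L(t))}$ are uniformly equivalent to $\|\cdot\|_\D$, the restricted semigroup is already a $C_0$-semigroup on $(\D,\|\cdot\|_\D)$ — no perturbation argument is needed for this step. You instead establish admissibility for $L_0$ alone via the isometry $\|\cdot\|_\D=\|L_0\cdot\|_H$ and then transport it to $L(t)$ through the identification $\widehat{L}(t)=\widehat{L}_0+L_1(t)$, the bounded-perturbation theorem on $\D$, and the converse direction of the admissibility criterion in \cite{Pazy}; all of these steps are justified (the identification of the part uses $L_1(t)(\D)\subset\D$ from Lemma \ref{lm1}(i), which you invoke). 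For stability the divergence is more substantial: the paper verifies the resolvent estimate $\|R(\alpha:\widehat{L}(t))\|\le(\alpha-m)^{-1}$ by hand, combining surjectivity of $\alpha\,Id-\widehat{L}(t)$ (inherited from stability on $H$) with the quasi-dissipativity estimate $\langle L(t)w,w\rangle_\D\le C_5\|w\|_\D^2$ coming from skew-symmetry of $L_0$, whereas you apply the perturbation stability theorem \cite[Theo.~5.2.3]{Pazy} on $(\D,\|\cdot\|_\D)$, exactly as the paper does on $H$ in Proposition \ref{1809b}. Your route is arguably more economical — it reuses the $H$-level argument verbatim on $\D$ and yields the slightly sharper stability constants $(1,C_5)$ versus the paper's $(1,\max\{C_4,C_5\})$ — at the price of leaning on the explicit description of $\widehat{L}(t)$ and on the ``if'' direction of the admissibility theorem; both are legitimate and each approach ultimately rests on the same two ingredients, namely the norm identity of Lemma \ref{lm3} and the uniform bound on $L_1(t)$ in $L(\D)$.
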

\begin{proof}
 Since for every $t\in [0,T], L(t)$ is a generator of $C_0$-semigroup $(S_t(\tau))_{\tau\geq0}$ and $\D=D(L(t))$, we obtain that $\D$ is an invariant subspace of $(S_t(\tau))_{\tau\geq0}$ for all $t\in[0,T]$. Recall that the norms $\|\cdot\|_{D(L(t))}$ and $\|\cdot\|_{\D}$ are equivalent on $\D$ uniformly in $t\in[0,T]$, see Lemma \ref{lm3}. Hence, the restriction $(\widehat{S}_t(\tau))_{\tau\geq0}$ of $(S_t(\tau))_{\tau\geq0}$ to $\D$ is a $C_0$-semigroup on $(\D, \|\cdot\|_{\D})$, i.e., $\D$ is $L(t)$-admissible.

Consider the part $\widehat{L}(t)$ of $L(t)$ on $\D$, $t\in[0,T]$. By Lemma \ref{lm1}(i), $L_1(t)\in L(\D)$ for all $t\in[0,T]$. Hence, we have 
\begin{equation}\label{14111}
D(\widehat{L}(t))=\big\{w\in \D\big| L(t)w\in \D\big\}= D(L_0^2)\ \text{and}\  \widehat{L}(t)w=L(t)w\ \text{for all}\ w\in D(L_0^2).
\end{equation}
Since the family $((L(t),\D))_{0\leq t\leq T}$ is stable on $H$, the operators $\alpha Id-L(t): \D\subset H\longrightarrow H$ are surjective for all $\alpha>C_4$.
Combining with (\ref{14111}), $\alpha Id-\widehat{L}(t): D(L_0^2)\subset \D\longrightarrow \D$ are surjective for all $t\in[0,T]$ and $\alpha>C_4$. By Lemma \ref{lm1}(i) together with the skew-symmetry of $L_0$ we have $ \langle L(t)w,w\rangle_{\D}\leq C_5\langle w,w\rangle_{\D}$ for all $t\in [0,T]$ and $w\in D(L_0^2)$. Hence, $\langle (L(t)-C_5)w,w\rangle_\D\leq 0$ for all $w\in D(L_0^2)$. So, $\|(\alpha Id-L(t))w\|^2_\D\geq (\alpha-C_5)^2\|w\|_\D^2\ \text{for all}\ \alpha>C_5$ and  $w\in D(L_0^2).$ Let $m:=\max\{C_4, C_5\}$. Then $\|R(\alpha: L(t))\|\leq \frac{1}{\alpha-m}$ for all $\alpha>m$ and $t\in[0,T]$. Hence, $\big((\widehat{L}(t), D(L_0^2))\big)_{0\leq t\leq T}$ is stable on $\D$ with stability constants $1$ and $m$.
\end{proof}
         
\begin{remark}\label{rm1811} 
 (i) By Lemma \ref{lm1}(ii), Proposition \ref{1809a}, \ref{1809b}, and  \ref{1809c}, there exists an evolution system $(U(t,\tau))_{0\leq\tau\leq t\leq T}$ on $H$ corresponding to $(L(t), D(L(t)))_{0\leq t\leq T}$ in the sense of \cite[p.~129]{Pazy} satisfying:

(a) $\|U(t,\tau)\|\leq e^{(C_4(t-\tau))}$ \ for all $0\leq \tau\leq t\leq T$;

(b) for all $w\in \D$ and $\tau\in[0,T], \ \partial_t^+U(t,\tau)w\Bigl|_{t=\tau}=L(\tau)w$\ for a.e.~$t\in[\tau,T];$ 

(c) for all $w\in \D$ and $t\in(0,T],$ \ $\partial_\tau U(t,\tau)w = -U(t,\tau)L(\tau)w$ \ for a.e.~$\tau\in[t,T]$,\\
see \cite[Theo.~5.3.1]{Pazy}.

\noindent
(ii) Due to \cite[p.~136,137]{Pazy} there exists a bounded sequence $(U_n(t,\tau))_{n\in\N}$ in $L(\D)$ approximating $U(t,\tau)$ in the strong operator topology for all $0\leq \tau\leq t\leq T$. Moreover, $(\D,\|\cdot\|_\D)$ is reflexive. Hence, for all $0\leq\tau\leq t\leq T$ we have $U(t,\tau)(\D)\subset \D$ and $\|U(t,\tau)\|_{L(\D)}\leq e^{(m(t-\tau))}$, where $m$ is as in the proof of Proposition \ref{1809c}.

\noindent
(iii) As far as we know, the uniqueness of evolution system $(U(t,\tau))_{0\leq\tau\leq t\leq T}$ for the case $L(t)\in L^1([0,T],L(\D,H))$ as in \cite[Remark 5.3.2]{Pazy} is not clear. Due to \cite[Exam.~8.20(b)]{Rud}, the fundamental theorem of calculus does not hold for some continuous function, which are only a.e.~differentiable with integrable derivative. 
\end{remark}
Let $C([\tau,T], \D),\ 0\leq\tau\leq T,$ be the space of continuous functions on $[\tau,T]$ with values in $\D$ and $\alpha>0$. Define $$\|f\|_\alpha:=\sup_{t\in[\tau,T]}\big(\|f(t)\|_{\D}e^{-\alpha t}\big),\quad \  f\in C([\tau,T], \D).$$ Then $\big(C([\tau,T], \D), \|\cdot\|_\alpha\big)$ is a Banach space.
\begin{proposition}\label{pp1}
 Let Assumption \ref{ass}(i) hold and $(U(t,\tau))_{0\leq \tau\leq t\leq T}$ be the evolution system as in Remark \ref{rm1811}. Then for each $w\in \D$ and $\tau\in[0,T]$ there exists an unique $u_w^\tau\in C([0,T], \D)$ such that $u_w^\tau(t)=U(t,\tau)w$ for all $t\in[\tau, T]$. Moreover, $(U(t,\tau))_{0\leq\tau\leq t\leq T}$ is an almost strong evolution system on $H$ corresponding to $((L(t),\D))_{0\leq t\leq T}$ with initial value space $\D$ and satisfies \begin{equation}\label{26104b}
\partial_tU(t,\tau)w=L(t)U(t,\tau)w\quad\text{for all}\  w\in \D,\  t\in[\tau,T].
\end{equation}
\end{proposition}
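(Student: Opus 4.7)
The strategy is to promote the $H$-evolution system $(U(t,\tau))$ of Remark \ref{rm1811} to a $\D$-valued, $\D$-strongly continuous evolution system by exploiting the stability of the parts $(\widehat{L}(t))$ on $\D$ (Proposition \ref{1809c}), and then to read off the almost strong evolution system properties from the pointwise and a.e.\ derivative formulas already available.

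First I would use Remark \ref{rm1811}(ii) to record that $U(t,\tau)(\D)\subset\D$ and $\|U(t,\tau)\|_{L(\D)}\leq e^{m(t-\tau)}$. Combined with the stability of $(\widehat{L}(t),D(L_0^2))$ on the reflexive space $(\D,\|\cdot\|_\D)$ from Proposition \ref{1809c} and the boundedness and measurability of $[0,T]\ni t\mapsto L(t)\in L(\D,H)$ from Lemma \ref{lm1}(ii), Pazy's construction \cite[Theorem 5.3.1]{Pazy} applied inside $\D$ produces an evolution system $\widehat{U}(t,\tau)$ on $\D$ with generator family $(\widehat{L}(t))$. The bounded approximating sequence $(U_n(t,\tau))$ of Remark \ref{rm1811}(ii) is exactly the one Pazy also uses to build $\widehat{U}$, and since it converges strongly in $L(\D)$ its limit must coincide with both $\widehat{U}(t,\tau)$ and with $U(t,\tau)|_\D$. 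Strong continuity in $\D$ of $\widehat U(\cdot,\tau)w$ then gives that $[\tau,T]\ni t\mapsto U(t,\tau)w\in\D$ is continuous for every $w\in\D$. Setting $u_w^\tau(t):=w$ for $t\in[0,\tau]$ and $u_w^\tau(t):=U(t,\tau)w$ for $t\in[\tau,T]$ yields an element of $C([0,T],\D)$ (continuity at $\tau$ from $U(\tau,\tau)=\mathrm{Id}$), and any competitor is uniquely determined on $[\tau,T]$ and constrained to agree with $w$ at $\tau$.

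Next I would verify the three parts of Definition \ref{26101} with initial space $Y=\D$. Parts (i) and (ii) are immediate from Remark \ref{rm1811}(i)(a) together with the $H$-continuity built into Pazy's construction. For part (iii), the inclusion $U(t,\tau)(\D)\subset\D=D(L(t))$ is from the previous paragraph, and the integral identity
$$\int_\tau^t L(r)U(r,\tau)w\,dr=U(t,\tau)w-w,\quad w\in\D,\ 0\le\tau\le t\le T,$$
is obtained by integrating the derivative formula of Remark \ref{rm1811}(i)(b) from $\tau$ to $t$. The integrability of $r\mapsto L(r)U(r,\tau)w$ in $H$ is provided by $\D$-boundedness of $U(\cdot,\tau)w$ (Remark \ref{rm1811}(ii)) together with the bounded measurable $L(\D,H)$-valued map $L(\cdot)$ from Lemma \ref{lm1}(ii). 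Equation (\ref{26104b}) itself is then just a restatement of the same derivative formula.

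\textbf{The main obstacle} is the identification $\widehat{U}(t,\tau)=U(t,\tau)|_\D$, since formally $U$ was produced in $L(H)$ and the candidate $\widehat{U}$ would arise from a separate invocation of Pazy's theorem in $L(\D)$. The ingredients making this identification work are the reflexivity of $(\D,\|\cdot\|_\D)$, which forces strong convergence of the weakly convergent Yosida-type approximants used in \cite[pp.~136--137]{Pazy}, and the $\D$-invariance and uniform $L(\D)$-bound for $U(t,\tau)$; these are exactly what Remark \ref{rm1811}(ii) has isolated for us. Once this identification is in place, the remaining steps are bookkeeping against Definition \ref{26101}.
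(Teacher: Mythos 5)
Your plan diverges from the paper's and, as written, has two genuine gaps. The decisive one concerns Definition \ref{26101}(iii) and equation (\ref{26104b}): you propose to obtain the integral identity by ``integrating the derivative formula of Remark \ref{rm1811}(i)(b) from $\tau$ to $t$.'' But Remark \ref{rm1811}(i)(b) (the conclusion of \cite[Theo.~5.3.1]{Pazy} for hyperbolic systems) only gives the \emph{right derivative at the initial time}, $\partial_t^+U(t,\tau)w|_{t=\tau}=L(\tau)w$; it does not assert $\partial_tU(t,\tau)w=L(t)U(t,\tau)w$ for $t>\tau$, so there is nothing to integrate, and (\ref{26104b}) is not ``a restatement of the same derivative formula.'' Passing from the one-sided derivative at $t=\tau$ to the forward Cauchy problem is exactly the nontrivial content of the proposition: Pazy's Theorem 5.4.3 would give it, but only under norm-continuity of $t\mapsto L(t)\in L(\D,H)$, whereas here one only has boundedness and measurability (Lemma \ref{lm1}(ii)). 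The paper circumvents this by a perturbation argument: it writes $L(t)=L_0+L_1(t)$, solves the mild equation $u(t)=S(t-\tau)w+\int_\tau^tS(t-r)L_1(r)u(r)\,dr$ by Banach's fixed point theorem in $(C([\tau,T],\D),\|\cdot\|_\alpha)$, differentiates using the closedness of $L_0$ to get $\tfrac{d}{dt}u_w^\tau(t)=L(t)u_w^\tau(t)$ for \emph{all} $t\in[\tau,T]$, identifies $u_w^\tau(t)=U(t,\tau)w$ via the uniqueness argument of \cite[Theo.~5.4.2]{Pazy} (which uses the backward derivative of Remark \ref{rm1811}(i)(c)), and only then integrates with \cite[Theo.~4.2.11]{Mikl}. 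This fixed-point construction also delivers, for free, the $\D$-continuity of $t\mapsto U(t,\tau)w$ and the uniqueness of $u_w^\tau$ that the statement asserts.

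The second gap is your identification $\widehat U(t,\tau)=U(t,\tau)|_{\D}$ and the ensuing strong continuity in $\D$. Reflexivity of $(\D,\|\cdot\|_\D)$ gives only \emph{weak} convergence in $\D$ of a subsequence of the bounded approximants $U_n(t,\tau)w$ --- which is precisely how Remark \ref{rm1811}(ii) obtains $U(t,\tau)(\D)\subset\D$ and the $L(\D)$-bound; it does not ``force strong convergence'' of Pazy's approximants. Moreover, a separate invocation of \cite[Theo.~5.3.1]{Pazy} with base space $\D$ would require verifying stability and admissibility data for $(\widehat L(t))$ relative to a further initial subspace and boundedness of $\widehat L(\cdot)$ into $\D$, none of which you check. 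So both the $\D$-continuity claim and the differential equation need the argument the paper actually gives; the final bookkeeping against Definition \ref{26101} in your proposal is fine once those two points are repaired.
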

\begin{proof} We have $U(t,\tau)(\D)\subset \D$, see Remark \ref{rm1811}(ii). First, we prove that $
\partial_t U(t,\tau)w=L(t)U(t,\tau)w$\ for all $w\in \D, t\in [\tau,T]$.

Let $(S(t))_{t\geq0}$ be the $C_0$ semigroup of contractions generated by $(L_0,\D)$, see Proposition \ref{1809a}. Following \cite[Theo.~1.2.4 and Theo.~4.1.3]{Pazy}, $S(t-\tau)(\D)\subset \D$ and $S(t-\tau)w$ is the unique solution of 
\begin{equation*}
\frac{du}{dt}(t)=L_0u(t),\ u(\tau)=w,\ w\in \D,\ \text{and}\ u\in C([\tau,T], \D).
\end{equation*}
We prove that the equation
\begin{equation}\label{08091}
\frac{du}{dt}(t)=L(t)u(t),\ u(\tau)=w,\ w\in \D,\ \text{and}\ u\in C([\tau, T], \D)
\end{equation}
has a unique solution $u_w^\tau$ and $u_w^\tau(t)=U(t,\tau)w$ for all $t\in[\tau, T]$, where $(U(t,\tau))_{0\leq\tau\leq t\leq T}$ is the evolution system as in Remark \ref{rm1811}. 

By Lemma \ref{lm1}(i) for each $w\in \D$ and $\tau\in[0,T]$ the following map is well-defined
\begin{equation*}\label{07092}
\begin{split}
C([\tau,T], \D) \ni u\longmapsto J_w^\tau u\in C([\tau,T],\D),  
\end{split}
\end{equation*}
where $ J^\tau_wu(t):=S(t-\tau)w+\int_\tau^tS(t-r)L_1(r)u(r)dr, \ t\in[\tau,T].$
 Since $(S(t))_{t\geq0}$ restricted to $\D$ is a contraction semigroup, together with Lemma \ref{lm1}(i) we have for arbitrary $u_1,u_2\in C([\tau,T], \D)$ and $t\in[\tau,T]$
\begin{multline*}
\|(J^\tau_wu_1-J^\tau_wu_2)(t)\|_{\D}e^{-\alpha t}\leq \int_\tau^te^{-\alpha t}\|S(t-r)L_1(r)\big(u_1(r)-u_2(r)\big)\|_{\D}dr\\
 \leq C_5\int_\tau^te^{-\alpha (t-r)}\|u_1(r)-u_2(r)\|_{\D}e^{-\alpha r}dr=C_5\|u_1-u_2\|_\alpha\int_\tau^te^{\alpha (r-t)}dr\leq \frac{C_5}{\alpha}\|u_1-u_2\|_\alpha.
\end{multline*}
We choose $\alpha>C_5$. Then by the Banach fixed point theorem, there exists a unique $u_w^\tau\in C([\tau,T], \D)$ such that $$u^\tau_w(t)=S(t-\tau)w+\int_\tau^tS(t-r)L_1(r)u^\tau_w(r)dr.$$ Moreover, using Lebesgue's dominated convergence and the closedness of $(L_0,\D)$, for all $t\in[\tau, T]$ we have
\begin{equation*}
\begin{split}
\frac{d}{dt}u^\tau_w(t)&=L_0S(t-\tau)w+\int_\tau^tL_0S(t-r)L_1(r)u^\tau_w(r)dr+L_1(t)u^\tau_w(t)\\
                      &=L_0\Big(S(t-\tau)w+\int_\tau^tS(t-r)L_1(r)u^\tau_w(r)dr\Big)+L_1(t)u^\tau_w(t)=L(t)u^\tau_w(t)
\end{split}
\end{equation*}
and $u^\tau_w(\tau)=w.$ Hence, $u^\tau_w(t)$ is a solution of (\ref{08091}) for $t\in[\tau,T]$. Similarly as in the proof of \cite[Theo.~5.4.2]{Pazy} we have 
\begin{equation}\label{26104a}
u^\tau_w(t)=U(t,\tau)w\quad\text{for all}\ t\in[\tau,T].
\end{equation}
That implies 
\begin{equation*}
\partial_tU(t,\tau)w=L(t)U(t,\tau)w\quad\text{for all}\  w\in \D,\  t\in[\tau,T].
\end{equation*}
 Due to measurability of $L(t)$, strong continuity of $U(t,\tau)$, Lemma \ref{lm1}(ii) together with Remark \ref{rm1811}(ii), by (\ref{26104b}) and \cite[Theo.~4.2.11]{Mikl} we have 
\begin{equation*}
\int_\tau^tL(r)U(r,\tau)wdr=U(t,\tau)w-w\quad\text{for all}\ w\in\D, t\in[\tau,T].\qedhere
\end{equation*}
\end{proof}   

We consider the family of linear operators $\big(L^*(t), D(L^*(t))\big)_{0\leq t\leq T}$ w.r.t. $\langle\cdot,\cdot\rangle_H$. Since for each $t\in[0,T]$, $\big(L(t), D(L(t))\big)$ generates a $C_0$-semigroup on the separable Hilbert space $H$, so does $\big(L^*(t), D(L^*(t))\big)$.
Since $L_1^*(t)\in L(H)$ for every $t\in[0,T]$, we have $L^*(t)=L^*_0+L_1^*(t)=-L_0+L_1^*(t)$. Note that on the subspace $\D$ we have\ $L_1^*(t)=\begin{pmatrix}
   0& \tilde{L}^*_1(t)\\
   0& 0
   \end{pmatrix}$, where
    \begin{equation}\label{06121}   \big(\tilde{L}^*_1(t)\v\big)(s)=\int_s^l\int_{s_1}^l\int_0^{s_2}\lambda(s_3,t)\partial_{s_3}\v(s_3)ds_3ds_2ds_1,\ \v\in H^{2,2}_{{\rm bc}}(0,l),\ s\in(0,l).
    \end{equation}
 Since $(L_0, \D)$ is skew-adjoint, we have the following chain of equalities of subspaces of $H$ $$\D^*:=\cap_{0\leq t\leq T}D(L^*(t))=D(L_0^*)=D(L_0)=\D,\ t\in[0,T].$$
 \begin{lemma}\label{30112}
Let Assumption \ref{ass}(i) hold, then $L_1^*(t)w\in \D$ for all $w\in \D$ and\ $t\in[0,T]$. Moreover, $D((L^*(t))^2)=D((L_0)^2)=H_{{\rm bc}}^{6,2}(0,l)\times H_{{\rm bc}}^{4,2}(0,l)$, independent of $t\in[0,T]$.
 \end{lemma}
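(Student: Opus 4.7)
The plan is to proceed in three steps, matching the two assertions of the lemma. For the first assertion, I would fix $w=\dbinom{\u}{\v}\in\D$ and compute directly from the representation \eqref{06121} to see that $L_1^*(t)w=\dbinom{\tilde L_1^*(t)\v}{\0}$ lies in $H^{4,2}_{\mathrm{bc}}(0,l)\times H^{2,2}_{\mathrm{bc}}(0,l)=\D$. Differentiating the triple integral yields
\begin{equation*}
\partial_s(\tilde L_1^*(t)\v)(s)=-\int_s^l\int_0^{s_2}\lambda(s_3,t)\partial_{s_3}\v(s_3)\,ds_3\,ds_2,\qquad \partial_{ss}(\tilde L_1^*(t)\v)(s)=\int_0^s\lambda(s_3,t)\partial_{s_3}\v(s_3)\,ds_3,
\end{equation*}
\begin{equation*}
\partial_{sss}(\tilde L_1^*(t)\v)(s)=\lambda(s,t)\partial_s\v(s),\qquad \partial_{ssss}(\tilde L_1^*(t)\v)(s)=\partial_s\lambda(s,t)\partial_s\v(s)+\lambda(s,t)\partial_{ss}\v(s).
\end{equation*}
By Assumption \ref{ass}(i), $\lambda(t)\in H^{3,2}((0,l);\R)$ embeds into $C^2$, so both $\lambda(t)$ and $\partial_s\lambda(t)$ are bounded; since $\v\in H^{2,2}_{\mathrm{bc}}(0,l)$ we get $\partial_s\v\in L^\infty$ and $\partial_{ss}\v\in L^2$, so the fourth derivative lies in $L^2(0,l)$, i.e.\ $\tilde L_1^*(t)\v\in H^{4,2}((0,l);\R^3)$.

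Next I would verify the four boundary conditions for membership in $H^{4,2}_{\mathrm{bc}}(0,l)$. From the formulas above, $(\tilde L_1^*(t)\v)(l)=0$ and $\partial_s(\tilde L_1^*(t)\v)(l)=0$ are immediate because the outer integrals run from $l$ to $l$; $\partial_{ss}(\tilde L_1^*(t)\v)(0)=0$ is immediate as well. For $\partial_{sss}(\tilde L_1^*(t)\v)(0)=\lambda(0,t)\partial_s\v(0)$, the vanishing boundary condition $\lambda(0,t)=0$ from Assumption \ref{ass}(i) forces this to be $\0$. This is exactly where the assumption on $\lambda$ at $s=0$ is used, and I expect it to be the most delicate point — not because it is technically hard, but because if $\lambda(0,t)$ were not zero, the lemma would fail.

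For the second assertion, I would use $L^*(t)=-L_0+L_1^*(t)$ with $L_1^*(t)\in L(H)$ and $D(L^*(t))=D(L_0)=\D$. By definition, $w\in D((L^*(t))^2)$ iff $w\in\D$ and $L^*(t)w\in\D$. Since the first part just proved $L_1^*(t)(\D)\subset\D$, the condition $L^*(t)w\in\D$ reduces to $L_0w\in\D$, i.e.\ $w\in D(L_0^2)$. Hence $D((L^*(t))^2)=D(L_0^2)$, in particular independent of $t$.

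Finally, to identify $D(L_0^2)$ explicitly, I would unpack: for $w=\dbinom{\u}{\v}\in\D$ we have $L_0w=\dbinom{\v}{-b\partial_{ssss}\u}$, so $L_0w\in\D$ requires $\v\in H^{4,2}_{\mathrm{bc}}(0,l)$ and $\partial_{ssss}\u\in H^{2,2}_{\mathrm{bc}}(0,l)$. The latter forces $\u\in H^{6,2}((0,l);\R^3)$ together with the original conditions of $H^{4,2}_{\mathrm{bc}}(0,l)$ and the additional conditions $\partial_{ssss}\u(l)=\partial_{sssss}\u(l)=\0$ coming from the first two boundary conditions of $H^{2,2}_{\mathrm{bc}}$ applied to $\partial_{ssss}\u$. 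By the definition of $H^{6,2}_{\mathrm{bc}}(0,l)$ given before Assumption \ref{ass}, this is exactly $\u\in H^{6,2}_{\mathrm{bc}}(0,l)$, yielding $D(L_0^2)=H^{6,2}_{\mathrm{bc}}(0,l)\times H^{4,2}_{\mathrm{bc}}(0,l)$.
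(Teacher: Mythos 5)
Your proof is correct and follows the same route the paper intends: the paper's own proof is a one-line remark that the first statement ``can be obtained easily by using \eqref{06121} together with Assumption \ref{ass}(i)'' and that the second follows from the first, and your computation of the four derivatives of the triple integral, the boundary checks (in particular the use of $\lambda(0,t)=0$ for $\partial_{sss}(\tilde L_1^*(t)\v)(0)=\0$), and the reduction of $D((L^*(t))^2)$ to $D(L_0^2)$ are exactly the details being omitted there. Nothing further is needed.
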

 \begin{proof} The first statement can be obtained easily by using (\ref{06121}) together with Assumption \ref{ass}(i). The second statement is implied by the first one.
 \end{proof}
\begin{proposition}\label{pp10} Let Assumption \ref{ass}(i) hold and $A^*(t):=L^*(-t),\ t\in[-T,0]$. Then $(A^*(t))_{-T\leq t\leq 0}$ generates an almost strong evolution system $(V(t,\tau))_{-T\leq \tau\leq t\leq 0}$ with the initial value space $\D$. In particular, for all $t\in[\tau,0]$ we have 
\begin{equation}\label{20091}
\partial_t V(t,\tau)w=A^*(t)V(t,\tau)w,\ w\in \D.\qedhere  
\end{equation}      
\end{proposition}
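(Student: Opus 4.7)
The plan is to adapt the entire construction of Proposition \ref{pp1} to the time-reversed adjoint family $A^*(t) = L^*(-t) = -L_0 + L_1^*(-t)$ on $[-T, 0]$. The operator $-L_0 = L_0^*$ is skew-adjoint on $\D$ by Lemma \ref{lm2}, so it generates a $C_0$-semigroup $(\tilde S(\tau))_{\tau \geq 0}$ of contractions on $H$ which leaves $\D$ invariant and restricts to a $C_0$-semigroup on $(\D, \|\cdot\|_\D)$. The perturbation $L_1^*(-t)$ is bounded on $H$ as the adjoint of $L_1(-t) \in L(H)$ from Lemma \ref{lm1}(i), and by Lemma \ref{30112} together with the explicit triple-integral representation (\ref{06121}) and the uniform bound in Assumption \ref{ass}(i), it is also bounded on $(\D, \|\cdot\|_\D)$ uniformly in $t \in [-T, 0]$. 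Measurability of $t \mapsto A^*(t) \in L(\D, H)$ is inherited from that of $\lambda$ and $\partial_s \lambda$.

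I would then mirror Propositions \ref{1809a}, \ref{1809b}, and \ref{1809c}: by the bounded perturbation theorem \cite[Theo.~3.1.1]{Pazy} each $A^*(t)$ is a $C_0$-semigroup generator on $H$; by \cite[Theo.~5.2.3]{Pazy} the family $(A^*(t))_{-T \leq t \leq 0}$ is stable on $H$; the subspace $\D$ is $A^*(t)$-admissible, since the argument of Lemma \ref{lm3} shows that $\|\cdot\|_{D(A^*(t))}$ and $\|\cdot\|_\D$ are equivalent uniformly in $t$ (using $D(A^*(t)) = \D$); and the parts of $A^*(t)$ in $\D$ form a stable family with common domain $D(L_0^2)$ by the same dissipativity and skew-symmetry computation as in Proposition \ref{1809c}. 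Applying \cite[Theo.~5.3.1]{Pazy} then yields an evolution system $(V(t, \tau))_{-T \leq \tau \leq t \leq 0}$ on $H$ satisfying the analogues of Remark \ref{rm1811}(a)--(c); in particular $V(t, \tau)(\D) \subset \D$ with norm uniformly bounded on $\D$.

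To conclude that $(V(t,\tau))$ is almost strong with initial value space $\D$ and to obtain (\ref{20091}), I would repeat verbatim the Banach fixed-point argument of Proposition \ref{pp1}: on $(C([\tau, 0], \D), \|\cdot\|_\alpha)$ the map
\[
J^\tau_w u(t) := \tilde S(t-\tau) w + \int_\tau^t \tilde S(t-r) L_1^*(-r) u(r)\, dr
\]
is a contraction for $\alpha$ large enough; its unique fixed point solves $\frac{du}{dt}(t) = A^*(t) u(t)$, $u(\tau) = w$ in $C([\tau, 0], \D)$, and by the identification $u^\tau_w(t) = V(t, \tau) w$ as in (\ref{26104a}) one obtains (\ref{20091}). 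The integral identity (\ref{30101}) required by Definition \ref{26101}(iii) then follows from \cite[Theo.~4.2.11]{Mikl} exactly as in the final line of the proof of Proposition \ref{pp1}. The main obstacle I anticipate is really just bookkeeping: verifying that each ingredient --- boundedness, stability, admissibility, stability of parts --- transfers cleanly to the time-reversed adjoint family, the one genuinely new estimate being the uniform bound on $L_1^*(-t)$ in the $\D$-norm via (\ref{06121}).
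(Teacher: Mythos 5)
Your proposal is correct and follows essentially the same route as the paper: the authors likewise reduce Proposition \ref{pp10} to re-running Lemma \ref{lm1}(ii) and Propositions \ref{1809a}--\ref{1809c} for the time-reversed adjoint family to get stability and admissibility, invoke \cite[Theo.~5.3.1]{Pazy} for the evolution system, and then repeat the fixed-point argument of Proposition \ref{pp1} to obtain \eqref{20091} and the almost-strong property. The only difference is one of detail: the paper states all of this in three lines, whereas you spell out the ingredients, including the uniform $\D$-norm bound on $L_1^*(-t)$ via \eqref{06121}, which the paper leaves implicit.
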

\begin{proof}
Similarly as in the proof of Lemma \ref{lm1}(ii), Propositions \ref{1809a}, \ref{1809b}, and \ref{1809c} we can show that $\D$ is $A^*(t)$-admissible and $(A^*(t))_{-T\leq t\leq 0}$ is stable (in $H$) with some stability constants $M_1, m_1$. The family $(\widehat{A}^*(t))_{-T\leq t\leq0}$ of parts $\widehat{A}^*(t)$ of $A^*(t)$ in $\D$ is stable in $\D$. For all $t\in[0,T]$, $A^*(t)\in L\big(\D, H\big)$ and the map $[0,T]\ni t\longmapsto A^*(t)\in L\big(\D, H\big)$ is bounded and measurable. Hence, by \cite[Theo.~5.3.1]{Pazy}, there exists an evolution system $(V(t,\tau))_{-T\leq\tau\leq t\leq0}$ on $H$ as discussed in Remark \ref{rm1811}. Now the same technique as in the proof of  Proposition \ref{pp1} can be applied to conclude the proof.   
\end{proof}
\begin{remark}\label{rm22111}
Set $\langle\u,\v\rangle_{H^{6,2}_{{\rm bc}}(0,l)}:= b^3\int_0^l\langle\partial_{ssssss}\u,\partial_{ssssss}\v\rangle_{{\rm euk}}ds,\ \u, \v\in H^{6,2}_{{\rm bc}}(0,l).$ Then the space $(D(L_0^2),\langle\cdot,\cdot\rangle_{D(L_0^2)})$ is a separable Hilbert space, where $\langle\cdot,\cdot\rangle_{D(L_0^2)}$ is the inner product on the product space $H_{{\rm bc}}^{6,2}(0,l)\times H_{{\rm bc}}^{4,2}(0,l)$. Using similar ideas as in the proof of Lemma \ref{lm1}, one can prove that $L^*(t)\in L\big(D(L_0^2), \D\big)$ and the map $[0,T]\ni t\longmapsto \|L^*(t)\|_{L(D(L_0^2), \D)}\in\R$ is bounded.
\end{remark}
\begin{proposition}\label{rm22112}
Let $(U^*(t,\tau))_{0\leq\tau\leq t\leq T}$ be the family of Hilbert adjoints $U^*(t,\tau)$ of\ $U(t,\tau)$ w.r.t. $\langle\cdot,\cdot\rangle_H$. Then for all $u\in \D$ and\ $t\in(0,T]$, the map $[0, t]\ni\tau\longmapsto U^*(t,\tau)u\in H$ is differentiable and 
\begin{equation}\label{20092}
\partial_\tau U^*(t,\tau)u=-L^*(\tau)U^*(t,\tau)u,\ \tau\in[0,t].
\end{equation}
Moreover, for all $0\leq\tau\leq t\leq T$ we have\ $U^*(t,\tau)(D(L_0^2))\subset D(L_0^2)$  and there exist some constants $1\leq M_2<\infty,\ m_2\in\R$ such that
\begin{equation}\label{13101}
\|U^*(t,\tau)u\|_{D(L_0^2)}\leq M_2e^{m_2(t-\tau)}\|u\|_{D(L_0^2)}\quad\ \text{for all}\ u\in D(L_0^2).
\end{equation}
\end{proposition}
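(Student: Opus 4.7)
The plan is to identify $U^*(t,\tau)$ with the backward evolution system $V$ from Proposition \ref{pp10}, and then transport the properties of $V$ (plus a refinement of Proposition \ref{1809c} on the smaller space $D(L_0^2)$) back to $U^*$ via this identification.

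First I would set $\widetilde V(t,\tau) := V(-\tau,-t)$ for $0 \le \tau \le t \le T$ and show $\widetilde V(t,\tau) = U^*(t,\tau)$ on $H$. For $u, v \in \D$ and $0 \le \tau \le t \le T$, consider
\begin{equation*}
f(r) := \langle U(r,\tau)v,\, \widetilde V(t,r)u \rangle_H, \quad r \in [\tau,t].
\end{equation*}
Proposition \ref{pp1} gives $r \mapsto U(r,\tau)v$ differentiable with derivative $L(r)U(r,\tau)v$, and Proposition \ref{pp10} gives $r \mapsto V(-r,-t)u$ differentiable in $r$ with derivative $-A^*(-r)V(-r,-t)u = -L^*(r)\widetilde V(t,r)u$; moreover $\widetilde V(t,r)u \in \D = D(L^*(r))$ by the same proposition. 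Hence
\begin{equation*}
f'(r) = \langle L(r)U(r,\tau)v,\, \widetilde V(t,r)u \rangle_H - \langle U(r,\tau)v,\, L^*(r)\widetilde V(t,r)u \rangle_H = 0,
\end{equation*}
so $f$ is constant and $\langle v, \widetilde V(t,\tau)u\rangle_H = \langle U(t,\tau)v,u\rangle_H = \langle v, U^*(t,\tau)u\rangle_H$. Density of $\D$ in $H$ together with uniform boundedness of $\widetilde V$ and $U^*$ extends this to $U^*(t,\tau) = \widetilde V(t,\tau)$ on $H$. Differentiating $U^*(t,\tau)u = V(-\tau,-t)u$ in $\tau$ and invoking Proposition \ref{pp10} then immediately yields \eqref{20092}.

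For the invariance of $D(L_0^2)$ and the bound \eqref{13101}, my approach is to repeat the construction of Lemma \ref{lm1} through Proposition \ref{1809c} with $\D$ replaced by the Hilbert space $(D(L_0^2),\langle\cdot,\cdot\rangle_{D(L_0^2)})$ of Remark \ref{rm22111}, applied to the family $(A^*(t))_{-T \le t \le 0}$. The ingredients to check are: (a) $L_1^*(t) \in L(D(L_0^2))$ uniformly in $t \in [0,T]$, by differentiating the explicit formula \eqref{06121} up to order six and using the boundary conditions on $\lambda$ in Assumption \ref{ass}(i); (b) the restriction of $L_0^* = -L_0$ to $D(L_0^2)$ generates a contraction semigroup on $D(L_0^2)$, inherited from skew-adjointness of $L_0$; (c) the family $(A^*(t))$ is then stable on $D(L_0^2)$ and $D(L_0^2)$ is $A^*(t)$-admissible. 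These allow applying \cite[Theo.~5.3.1]{Pazy} in $D(L_0^2)$, and the reflexivity argument of Remark \ref{rm1811}(ii) yields $V(t,\tau)(D(L_0^2)) \subset D(L_0^2)$ with a bound $\|V(t,\tau)u\|_{D(L_0^2)} \le M_2 e^{m_2(t-\tau)}\|u\|_{D(L_0^2)}$. Translating via $U^*(t,\tau) = V(-\tau,-t)$ gives \eqref{13101}.

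The main obstacle I expect is point (a): showing that $\widetilde L_1^*(t)$ maps $H^{4,2}_{\mathrm{bc}}(0,l)$ into $H^{6,2}_{\mathrm{bc}}(0,l)$ with bound uniform in $t$. Differentiating \eqref{06121} six times produces products of derivatives of $\lambda$ (controlled by the uniform $H^{3,2}$ bound on $\lambda$) with derivatives of $v$ up to order four, giving the required $L^2$ estimate. The six boundary conditions defining $H^{6,2}_{\mathrm{bc}}$ then reduce to checking $\partial_{ssss}(\widetilde L_1^*(t)v)(l) = \partial_{sssss}(\widetilde L_1^*(t)v)(l) = 0$, which follow from $\lambda(l,t) = \partial_s\lambda(l,t) = 0$ combined with $v(l) = \partial_s v(l) = 0$ for $v \in H^{4,2}_{\mathrm{bc}}(0,l)$.
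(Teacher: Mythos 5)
Your proposal follows essentially the same route as the paper: the paper also identifies $U^*(t,\tau)$ with the time-reversed system $V$ by differentiating the pairing $\langle V(r,\tau)u, U(-r,-t)w\rangle_H$, showing the derivative vanishes, and extending by density, and it likewise obtains \eqref{13101} by invoking the Kato--Pazy construction with the admissible subspace upgraded to $D(L_0^2)$ (via Remark \ref{rm22111} and the approximation/reflexivity argument of Remark \ref{rm1811}(ii)). Your more explicit treatment of the mapping property $\tilde L_1^*(t): H^{4,2}_{\rm bc}(0,l)\to H^{6,2}_{\rm bc}(0,l)$, including the boundary conditions at $s=l$, correctly fills in what the paper leaves as ``similarly''.
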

\begin{proof} 
Let $R(-\tau,-t):=U^*(t,\tau),\ 0\leq\tau\leq t\leq T$. We consider  $(R(t,\tau))_{-T\leq\tau\leq t\leq0}$ and prove that $V(t,\tau)=R(t,\tau), -T\leq\tau\leq t\leq0$. Let $u, w\in \D$, and $-T\leq \tau\leq r\leq t\leq0$. Since $R^*(t,r)=U^{**}(-r,-t)=U(-r,-t), -T\leq t\leq r\leq0$, we have $$\langle R(t,r)V(r,\tau)u,w\rangle=\langle V(r,\tau)u, R^*(t,r)w\rangle=\langle V(r,\tau)u,U(-r,-t)w\rangle.$$ 
Furthermore, since $V(r,\tau)u$ and $U(-r,-t)w$ are strongly differentiable in $H$ for $r\in[\tau,t]\subset [-T,0]$, see (\ref{26104b}) and (\ref{20091}), the function $[\tau,t]\ni r\longmapsto\langle V(r,\tau)u, U(-r,-t)w\rangle$ is differentiable on $[\tau,t]$ and 
\begin{multline*}
 \partial_r\langle V(r,\tau)u, U(-r,-t)w\rangle=\langle A^*(r)V(r,\tau)u, U(-r,-t)w\rangle + \langle V(r,\tau)u, -L(-r)U(-r,-t)w\rangle\\
=\langle A^*(r)V(r,\tau)u, U(-r,-t)w\rangle+\langle -A^*(r)V(r,\tau)u, U(-r,-t)w\rangle = 0.
\end{multline*}
That implies $\int_\tau^t\partial_r\langle R(t,r)V(r,\tau)u,w\rangle dr =0.$
Moreover, by \cite[Theo.~4.2.11]{Mikl}, we have $\int_\tau^t\partial_r\langle R(t,r)V(r,\tau)u,w\rangle dr= \langle V(t,\tau)u, w\rangle -\langle R(t,\tau)u,w\rangle$. Hence $\langle V(t,\tau)u, w\rangle =\langle R(t,\tau)u,w\rangle.$ Since $\D$ is dense in $H$ and $V(t,\tau), R(t,\tau)$ are linear bounded operators on $H$, we have 
 \begin{equation}\label{18111}
 V(t,\tau)=R(t,\tau) \quad \text{for all}\ -T\leq\tau\leq t\leq0.
 \end{equation}
  Hence, for all $u\in \D$\ and\ $-T<t\leq0$\ we have $ [-T,t]\ni\tau\longmapsto R(t,\tau)u\in H$ is differentiable. Back to positive time we  have
\begin{equation*}
\begin{split}
\partial_\tau U^*(t,\tau)u&=\partial_\tau R(-\tau,-t)u=\partial_\tau V(-\tau,-t)u\\                                           &=-A^*(-\tau)V(-\tau,-t)u=-A^*(-\tau)R(-\tau,-t)u=-L^*(\tau)U^*(t,\tau)u.
\end{split}
\end{equation*}
Similar as in Remark \ref{rm1811}(ii), combining the definition of $R(t,\tau)$ with (\ref{18111}) we obtain (\ref{13101}).
\end{proof}
 \begin{proposition}\label{0911p} For all $u\in D(L_0^2)$ and $t\in[0,T]$, the map $[0,t]\ni\tau\longmapsto \varrho(\tau):=U^*(t,\tau)u\in \D$ is an element in $C_w^1([0,T];\D)$ and $\varrho'_\D(\tau)=-L^*(\tau)\varrho(\tau)$ for all $\tau\in[0,t]$.
 \end{proposition}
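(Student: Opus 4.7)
The plan is to promote the $H$-strong derivative formula $\partial_\tau U^*(t,\tau)u = -L^*(\tau)U^*(t,\tau)u$ obtained in Proposition \ref{rm22112} to a weak derivative in the $\D$-inner product, exploiting the mapping property $L^*(\tau) \in L(D(L_0^2),\D)$ from Remark \ref{rm22111} and the $D(L_0^2)$-boundedness estimate \eqref{13101}. Setting $\eta(\tau) := -L^*(\tau)\varrho(\tau)$, both $\varrho$ and $\eta$ take values in $\D$ with uniform $\D$-norm bounds on $[0,t]$: \eqref{13101} gives $\|\varrho(\tau)\|_{D(L_0^2)} \le M_2 e^{m_2(t-\tau)}\|u\|_{D(L_0^2)}$, which dominates $\|\varrho(\tau)\|_\D$ via the continuous embedding $D(L_0^2) \hookrightarrow \D$, and then $\|\eta(\tau)\|_\D$ is controlled by $\sup_{r\in[0,T]} \|L^*(r)\|_{L(D(L_0^2),\D)} \cdot \|\varrho(\tau)\|_{D(L_0^2)}$.

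Next, I would upgrade the $H$-valued fundamental theorem of calculus
\begin{equation*}
\varrho(\tau) - \varrho(\sigma) = \int_\sigma^\tau \eta(r)\,dr \qquad (0 \le \sigma \le \tau \le t),
\end{equation*}
which follows from \eqref{20092} via Bochner integration in $H$, to the corresponding identity in $\D$. Since $\eta : [0,t] \to \D$ is bounded and strongly measurable (the latter obtained by combining the $H$-strong continuity of $\varrho$, the uniform $D(L_0^2)$-bound, reflexivity of the separable space $D(L_0^2) \hookrightarrow \D$, and Pettis' theorem), its Bochner integral in $\D$ exists. The continuity of $\D \hookrightarrow H$ then forces this $\D$-Bochner integral to agree with the $H$-Bochner integral already computed, so the displayed identity holds in $\D$.

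Finally, pairing the $\D$-valued identity with an arbitrary $v \in \D$ yields
\begin{equation*}
\langle \varrho(\tau), v \rangle_\D - \langle \varrho(\sigma), v \rangle_\D = \int_\sigma^\tau \langle \eta(r), v \rangle_\D\,dr,
\end{equation*}
so $\tau \mapsto \langle \varrho(\tau), v \rangle_\D$ is absolutely continuous on $[0,t]$ with a.e.~derivative $\langle \eta(\tau), v \rangle_\D$. Extending $\varrho$ constantly by $u = U^*(t,t)u$ and $\eta$ by zero to the full interval $[0,T]$, the uniform $\D$-bounds yield square integrability and place $\langle \varrho(\cdot), v \rangle_\D$ in $H^{1,2}(0,T)$, verifying both conditions of Definition \ref{07112} with $\varrho'_\D(\tau) = \eta(\tau) = -L^*(\tau)\varrho(\tau)$.

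The hard part will be rigorously establishing strong measurability of $\varrho$ and $\eta$ as $\D$-valued maps, since the derivative of Proposition \ref{rm22112} is a priori only an $H$-strong statement; the bridge is the uniform $D(L_0^2)$-estimate, which upgrades $H$-continuity to $\D$-weak continuity via Banach--Alaoglu in the reflexive separable space $D(L_0^2)$, after which Pettis' theorem provides strong measurability. Once this is in hand, the identification of the weak derivative in $\D$ is a consistency statement between Bochner integrals in the nested spaces $\D \hookrightarrow H$.
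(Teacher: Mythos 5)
Your proposal is correct and rests on the same pillars as the paper's proof --- the $H$-strong derivative \eqref{20092}, the uniform bound coming from Remark \ref{rm22111} and \eqref{13101}, and the fundamental theorem of calculus --- but the mechanism by which you transfer the derivative from $H$ to $\D$ is genuinely different. The paper exploits the structural identity $\langle x,w\rangle_\D=\langle L_0x,L_0w\rangle_H=\langle x,L_0^*L_0w\rangle_H$ for $w\in D(L_0^2)$ (see \eqref{13104} and Lemma \ref{lm3}) to differentiate the scalar functions $\tau\mapsto\langle U^*(t,\tau)u,w\rangle_\D$ directly from \eqref{20092}, obtains the difference-quotient identity \eqref{19092} on the dense subspace $D(L_0^2)$, and then extends to all of $\D$ via density together with the uniform bound \eqref{bcd} on the difference quotients. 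You instead integrate the derivative as a $\D$-valued Bochner integral and invoke consistency of Bochner integrals under the continuous embedding $\D\hookrightarrow H$; this buys you a cleaner final step (testing against arbitrary $v\in\D$ with no density argument) and, notably, makes explicit a measurability point that the paper glosses over when it treats $\int_\tau^{\tau+h}L^*(r)U^*(t,r)u\,dr$ as an element of $\D$ in \eqref{19092}. One soft spot: your Pettis argument as stated establishes strong $\D$-measurability of $\varrho$, but for $\eta(\tau)=-L^*(\tau)\varrho(\tau)$ the $\tau$-dependence of the operator needs an extra word --- it does not follow from weak continuity of $\varrho$ alone. This is easily repaired by writing $L^*(\tau)=-L_0+L_1^*(\tau)$ and using the explicit integral representation \eqref{06121} together with the measurability of $\lambda$ from Assumption \ref{ass}(i), after which weak measurability of $\tau\mapsto\langle\eta(\tau),v\rangle_\D$ and hence (by Pettis, $\D$ separable) strong measurability follows. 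With that addition your argument is complete.
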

 
 \begin{proof} First, applying Remark \ref{rm1811}(ii) to $(U^*(t,\tau))_{0\leq\tau\leq t\leq T}$ we obtain square integrability of the function $[0,t]\ni \tau \longmapsto U^*(t,\tau)u\in \D$ for all $w\in D(L_0^2)$. Second, by Proposition \ref{rm22112} we have $U^*(t,\tau)(D(L^2_0))\subset D(L_0^2)$ for all $0\leq\tau\leq t\leq T$. We shall prove that the map $[0,t]\ni \tau \longmapsto U^*(t,\tau)u\in \D$ is weakly differentiable for every $u\in D(L_0^2)$. Due to Lemma \ref{lm3}, for all $w\in D(L_0^2)$ we have
 \begin{equation}\label{13104}
  \langle U^*(t,\tau)u,w\rangle_{\D}=\langle L_0U^*(t,\tau)u,L_0w\rangle_H=\langle U^*(t,\tau)u,L_0^*L_0w\rangle_H.
  \end{equation}
By (\ref{20092}) together with (\ref{13104}) we have for all $\tau\in[0,t]$
\begin{equation}\label{dense}
 \begin{split}
\partial_\tau\langle U^*(t,\tau)u,w\rangle_\D&= \partial_\tau\langle U^*(t,\tau)u,L_0^*L_0w\rangle_H=\langle -L^*(\tau)U^*(t,\tau)u,L_0^*L_0w\rangle_H\\
&=\langle -L^*(\tau)U^*(t,\tau)u,w\rangle_{\D},\ w\in D(L^2_0).
 \end{split}
 \end{equation}
  Moreover, by Remark \ref{rm22111} and Proposition \ref{rm22112}, there exists a constant $0<C_7<\infty$ such that
\begin{equation}\label{sup}
\sup_{\tau\in[0,t]}\|-L^*(\tau)U^*(t,\tau)u\|_{\D}\leq C_7\|u\|_{D(L_0^2)}\quad\text{for all}\ u\in D(L_0^2).
\end{equation}
Due to (\ref{dense}) and (\ref{sup}) and the fundamental theorem of calculus, see \cite[Theo.~4.2.11]{Mikl}, for all $w\in D(L_0^2), \tau\in[0,t],$ and $h\ne0$ such that $\tau+h\in[0,t]$, we have 
\begin{multline}\label{dt222}
\Big\langle \frac{1}{h}(U^*(t,\tau+h)u-U^*(t,\tau)u),w\Big\rangle_{\D}=\frac{1}{h}\int_\tau^{\tau+h}\partial_r\langle U^*(t,r)u,w\rangle_{\D}dr\\
=\frac{1}{h}\int_\tau^{\tau+h}\langle -L^*(r)U^*(t,r)u,w\rangle_{\D}dr=-\Big\langle \frac{1}{h}\int_\tau^{\tau+h} L^*(r)U^*(t,r)u dr,w\Big\rangle_{\D}
\end{multline}
Since $D(L_0^2)$ is dense in $\D$, by (\ref{dt222}) for every $u\in D(L_0^2), \tau\in[0,t]$ and $h\ne0$ such that $\tau+h\in[0,t]$ we have
 \begin{equation}\label{19092}
\frac{1}{h}(U^*(t,\tau+h)u-U^*(t,\tau)u)=-\frac{1}{h}\int_\tau^{\tau+h} L^*(r)U^*(t,r)u dr.
\end{equation}
Hence, together with (\ref{sup}) we have
\begin{equation}\label{bdt}
\begin{split}
\left\| \frac{1}{h}\int_\tau^{\tau+h}( -L^*(r)U^*(t,r)u) dr\right\|_{\D}\leq \frac{1}{h}\int_\tau^{\tau+h}\| -L^*(r)U^*(t,r)u\|_{\D} dr \leq C_7\|u\|_{D(L_0^2)}.
\end{split}
\end{equation}
Combining (\ref{19092}) with (\ref{bdt}) we can conclude 
\begin{equation}\label{bcd}
\left\|\frac{1}{h}\big(U^*(t,\tau+h)u-U^*(t,\tau)u\big)\right\|_{\D}\leq C_7\|u\|_{D(L_0^2)}\ \ \text{for all}\ h\ne0\ \text{such that}\ \tau+h\in[0,t] .
\end{equation}
(\ref{dense}) and (\ref{bcd}) together now imply that $[0,t]\ni\tau\longmapsto\varrho(\tau)=U^*(t,\tau)u\in \D$ is weakly differentiable  for all $\tau\in[0,t]$ and
\begin{equation}\label{20093}
\varrho'_{\D}(\tau)=-L^*(\tau)U^*(t,\tau)u=-L^*(\tau)\varrho(\tau).
\end{equation}
By (\ref{sup}) and (\ref{20093}), the function $[0,T]\ni \tau\longmapsto \langle\varrho_{L^*}(\tau),w\rangle_\D\in\R$ is weakly differentiable and its derivative is square integrable for all $w\in\D$. Hence $[0,T]\ni \tau\longmapsto \langle\varrho(\tau),w\rangle_\D$ is in $H^{1,2}(0,T)$, i.e., for all $u\in D(L_0^2)$ and $t\in[0,T]$, the map $[0,t]\ni\tau\longmapsto U^*(t,\tau)u\in \D$ is an element in $C_w^1([0,T];\D)$. Together with (\ref{20093}), Proposition \ref{0911p} is proved.
\end{proof}

\begin{proof}[Proof of Theorem \ref{12101}]
Note that in our application $\D=\D^*=H^{4,2}_{bc}(0,l)\times H^{2,2}_{bc}(0,l)$. Combining Lemma \ref{lm1}(i), Lemma \ref{lm2}, and Lemma \ref{lm3} we can infer Assumption \ref{30111}(i). Remark \ref{rm1811}(i)(c) yields Assumption \ref{30111}(ii). Assumption \ref{30111}(iii) can be concluded from Proposition \ref{0911p}, where $Y^*:= D(L_0^2)=H_{{\rm bc}}^{6,2}(0,l)\times H_{{\rm bc}}^{4,2}(0,l)$ being characterized in Lemma \ref{30112}. The almost strong evolution system $(U(t,\tau))_{0\leq\tau\leq t\leq T}$ required in Theorem \ref{existence}(i) is constructed by Proposition \ref{pp1}. Its initial value subspace $Y$ is equal to $\D$. The condition in (\ref{07111}) can be concluded from 
\begin{equation*}\label{dktp}
{\rm Tr}\big((U(t,\tau)AQ^{\frac{1}{2}})(U(t,\tau)AQ^{\frac{1}{2}})^*\big)\leq \sigma^2e^{2C_4T}{\rm Tr}(Q) <\infty,
\end{equation*}
where we use Remark \ref{rm1811}(i)(a) and the assumption ${\rm Tr}(Q)<\infty$. Hence, by Theorem \ref{existence} and \ref{main} there exists a unique analytically weak solution to (\ref{ptae}),~(\ref{inicontt}),~(\ref{boundcontt}) for all initial values $(\boldsymbol{\xi}_1,\boldsymbol{\xi}_2)\in Y^*=H_{{\rm bc}}^{6,2}(0,l)\times H_{{\rm bc}}^{4,2}(0,l)$. Thus, Theorem \ref{12101} is proved.
 \end{proof}


\subsection{The non-homogeneous problem}\label{nonhomogen}

\begin{theorem} \label{12102} Let Assumption \ref{ass} hold. Then there exists a unique analytically weak solution to \eqref{pdae},
\eqref{inicond}, \eqref{boundcond}.
 \end{theorem}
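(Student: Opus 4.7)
The plan is to reduce the non-homogeneous problem~\eqref{pdae},~\eqref{inicond},~\eqref{boundcond} to the homogeneous one already handled by Theorem~\ref{12101} through a deterministic shift by the fixed function $\x_{{\rm sh}}(s):=(s-l)\e_3$. This function was tailor-made to satisfy all four conditions in~\eqref{boundcond} (one checks $\x_{{\rm sh}}(l)=\0$, $\partial_s\x_{{\rm sh}}(l)=\e_3$, and $\partial_{ss}\x_{{\rm sh}}(0)=\partial_{sss}\x_{{\rm sh}}(0)=\0$) and it matches the initial position prescribed in~\eqref{inicond}. Setting $\y(s,t):=\x(s,t)-\x_{{\rm sh}}(s)$ therefore turns the non-homogeneous boundary data into the homogeneous condition~\eqref{boundcontt} and sends the initial data to $\y(s,0)=\partial_t\y(s,0)=\0$, which trivially lie in the spaces $H_{{\rm bc}}^{6,2}(0,l)$ and $H_{{\rm bc}}^{4,2}(0,l)$ required by Theorem~\ref{12101}.

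Substituting $\x=\y+\x_{{\rm sh}}$ into~\eqref{pdae} and using that $\x_{{\rm sh}}$ is time-independent and linear in $s$ (so $d\,\partial_t\x_{{\rm sh}}=0$, $\partial_{ssss}\x_{{\rm sh}}=\0$, and $\partial_s(\lambda\partial_s\x_{{\rm sh}})(s,t)=\partial_s\lambda(s,t)\,\e_3$), the candidate equation for $\y$ becomes
\begin{equation*}
d\,\partial_t\y(s,t)=\bigl(\partial_s(\lambda\partial_s\y)(s,t)-b\partial_{ssss}\y(s,t)-g\e_3+\tilde\f^{{\rm det}}(s,t)\bigr)dt+\sigma d\w(s,t),
\end{equation*}
where $\tilde\f^{{\rm det}}(s,t):=\f^{{\rm det}}(s,t)+\partial_s\lambda(s,t)\,\e_3$. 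Under Assumption~\ref{ass}(i), Sobolev embedding yields $\partial_s\lambda(\cdot,t)\in C^1([0,l];\R)\subset L^2((0,l);\R)$ uniformly in $t$, while $\partial_s\lambda$ is jointly measurable on $[0,l]\times[0,T]$; hence $t\mapsto\partial_s\lambda(\cdot,t)\,\e_3$ is Bochner integrable into $L^2(0,l)$. Combined with Assumption~\ref{ass}(ii), the modified force $\tilde\f^{{\rm det}}$ satisfies the same hypothesis as $\f^{{\rm det}}$ in Theorem~\ref{12101}.

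With this set-up, Theorem~\ref{12101} supplies a unique analytically weak solution $\y$ of the shifted homogeneous problem, and defining $\x(s,t):=\y(s,t)+(s-l)\e_3$ produces an analytically weak solution of~\eqref{pdae},~\eqref{inicond},~\eqref{boundcond}. Uniqueness follows by reversing the bijection: two solutions $\x_1,\x_2$ yield $\y_i:=\x_i-\x_{{\rm sh}}$, both analytically weak solutions of the same shifted problem, and Theorem~\ref{12101} forces $\y_1=\y_2$, hence $\x_1=\x_2$.

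The delicate point, and the main obstacle to a fully formal write-up, is to fix the precise meaning of an analytically weak solution of the non-homogeneous problem: since $\x$ takes values in the affine set $\x_{{\rm sh}}+H_{{\rm bc}}^{2,2}(0,l)$ rather than in $H=H_{{\rm bc}}^{2,2}(0,l)\times L^2(0,l)$, the tested identity against $h\in\D^*$ cannot be read off directly from Definition~\ref{26101}. The natural interpretation, and the one that the above shift makes work, is to declare $\x$ an analytically weak solution precisely when $\y=\x-\x_{{\rm sh}}$ is one in the sense of Definition~\ref{26101}; the boundary contributions arising from any direct integration by parts against $h\in\D^*$ are exactly the terms that get absorbed into $\partial_s\lambda\,\e_3$ inside $\tilde\f^{{\rm det}}$, making the two formulations equivalent.
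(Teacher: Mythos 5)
Your proposal is correct and follows essentially the same route as the paper: both shift by the stationary profile $(s-l)\e_3$, observe that this converts the boundary and initial data to the homogeneous case of Theorem~\ref{12101} at the cost of perturbing the deterministic force by $\partial_s\lambda\,\e_3$ (which preserves Assumption~\ref{ass}(ii)), and transfer existence and uniqueness back and forth through this bijection. Your closing remark on how to interpret ``analytically weak solution'' for the affine state space is a fair point that the paper leaves implicit, but it does not change the argument.
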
 
\begin{proof} Consider the function 
$$[0,l] \times [0,T] \mapsto {\bf v}(s,t) := (s-l){\bf e}_3 \in {\mathbb R}^3.$$
It is a strong solution to \eqref{pdae}, \eqref{inicond}, \eqref{boundcond} for $\f^{\rm det}=(g-\partial_s\lambda)\e_3$ and $\sigma = 0$.
Let $u$ be the unique analytically weak solution to \eqref{pdae}, \eqref{boundcontt} with $u(s,0) = \partial_tu(s,0) = \0$ for all $s \in [0,l]$
provided in Theorem \ref{12101}. Then ${\bf x} := {\bf u} + {\bf v}$ is the unique analytically weak solution to \eqref{pdae},
\eqref{inicond}, \eqref{boundcond} corresponding to a deterministic force $\f^{\rm det} - \partial_s\lambda\e_3$.
Since $\partial_s\lambda\e_3$ fulfills Assumption \ref{ass}(ii) and functions fulfilling this assumption form a linear vector space,
we do not obtain any restriction on the class of admissible deterministic forces.
\end{proof}

{\bf Acknowledgements:} The authors wish to thank Florian Conrad for helpful discussions. This work has been supported by Bundesministerium f\"ur Bildung und Forschung,
Schwerpunkt "Mathematik f\"ur Innovationen in Industrie und Dienstleistungen",
Verbundprojekt ProFil, 05M10UKB.
The financial support by DAAD through the PhD Program ``Mathematics in Industry and Commerce'' at TU Kaiserslautern is gratefully acknowledged.

\begin{thebibliography}{BMS05}

\bibitem[BMS05]{BMS05}
Z.~Brze{\'z}niak, B.~Maslowski, and J.~Seidler.
\newblock Stochastic nonlinear beam equations.
\newblock {\em Probab. Theory Relat. Fields}, 132:119--149, 2005.

\bibitem[DM03]{DM03}
Robert~C. Dalang and Carl Mueller.
\newblock {Some non-linear s. p. d. e's that are second order in time.}
\newblock {\em Electron. J. Prob.}, 8:1--21, 2003.

\bibitem[DS05]{DS05}
Robert~C. Dalang and Marta Sanz-Sol\'e.
\newblock {Regularity of the sample paths of a class of second-order spde's.}
\newblock {\em J. Funct. Anal.}, 227(2):304--337, 2005.

\bibitem[DZ92]{DaPr}
G.~DaPrato and J.~Zabczyk.
\newblock {\em Stochastic equations in infinite dimensions}.
\newblock Encyclopedia of Mathematics and its Applications, 44. Cambridge
  University Press, Cambridge, 1992.

\bibitem[Kat73]{KatII}
T.~Kato.
\newblock Linear evolution equations of "hyperbolic'' type. {II}.
\newblock {\em J. Math. Soc. Japan}, 25:648--666, 1973.

\bibitem[Mar01]{Mar}
N.~Marheineke.
\newblock {\em Modified FEM for fibre-fluid interactions}.
\newblock Diploma thesis, University of Kaiserslautern, 2001.

\bibitem[Mik70]{Mik}
S.~G. Mikhlin.
\newblock {\em Mathematical physics, an advanced course. Translated by
  Multilingua (Scientific Translations), London.}
\newblock North-Holland Publishing Company XIV, Amsterdam-London, 1970.

\bibitem[Mik98]{Mikl}
M.~Miklav$\check{{\rm c}}$i$\check{{\rm c}}$.
\newblock {\em Applied functional analysis and partial differential equations}.
\newblock World Scientific Publishing Co., Inc., River Edge, NJ, 1998.

\bibitem[MW06]{MW}
N.~Marheineke and R.~Wegener.
\newblock Fiber dynamics in turbulent flows: general modeling framework.
\newblock {\em SIAM J. Appl. Math.}, 66(5):1703--1726, 2006.

\bibitem[MW07]{MW1}
N.~Marheineke and R.~Wegener.
\newblock Fiber dynamics in turbulent flows: specific taylor drag.
\newblock {\em SIAM J. Appl. Math.}, 68(1):1--23, 2007.

\bibitem[MZ99]{MZ}
R.~Manthey and T.~Zausinger.
\newblock Stochastic evolution equations in ${L}_\rho^{2\nu}$.
\newblock {\em Stochastic and Stochastic Rep.}, 66(1--2):37--85, 1999.

\bibitem[Paz83]{Pazy}
A.~Pazy.
\newblock {\em Semigroups of linear operators and applications to partial
  differential equations}.
\newblock Applied Mathematical Sciences, 44. Springer-Verlag, New York, 1983.

\bibitem[PR07]{PR}
C.~Pr{\'e}v{\^o}t and M.~R{\"o}ckner.
\newblock {\em A concise course on stochastic partial differential equations}.
\newblock Lecture Notes in Mathematics, 1905. Springer, Berlin, 2007.

\bibitem[Rud74]{Rud}
W.~Rudin.
\newblock {\em Real and complex analysis, second edition}.
\newblock McGraw-Hill Series in Higher Mathematics. McGraw-Hill Book Co., New
  York-D{\"u}sseldorf-Johannesburg, 1974.

\bibitem[VZ08]{VZ}
M.~Veraar and J.~Zimmerschied.
\newblock Non-autonomous stochastic {C}auchy problems in {B}anach spaces.
\newblock {\em Studia Math.}, 185(1):1--34, 2008.

\end{thebibliography}

\hfill
%
%
%

\end{document}